\documentclass[twocolumn]{autart}    % Enable this line and disable the 
\usepackage{amsmath,amssymb,amsfonts}
\usepackage{algorithm}
\usepackage{algpseudocode}
\usepackage{graphicx}
\usepackage{algorithm}
\usepackage{hyperref}
\usepackage{textcomp}
\usepackage{amsmath}
\usepackage[normalem]{ulem}
\usepackage{bm}
\usepackage{mathrsfs}
\usepackage[utf8]{inputenc}
\usepackage[titletoc]{appendix}
\usepackage[authoryear]{natbib}
\usepackage{color}
\usepackage{hyperref}



\newtheorem{lemmax}{Lemma}[section]
\begin{document}

\begin{frontmatter}
%\runtitle{Insert a suggested running title}  % Running title for regular 
                                              % papers but only if the title  
                                              % is over 5 words. Running title 
                                              % is not shown in the output.

\title{Asymptotically Efficient Recursive Identification Under One-Bit Communications Achieving Original CRLB \thanksref{footnoteinfo}} % Title, preferably not more 
                                                % than 10 words.

\thanks[footnoteinfo]{This paper was not presented at any IFAC 
meeting. Corresponding author: Yanlong Zhao.
This work was supported by the National Natural Science Foundation of China under Grants 62588101, 62401340, and T2293773, and the Natural Science Foundation of Shandong Province under Grant ZR2023QF103. }

\author[Paestum,Rome]{Xingrui Liu}\ead{liuxingrui@amss.ac.cn},    % Add the 
\author[Baiae]{Jieming Ke}\ead{kejieming@amss.ac.cn},               % e-mail address 
\author[Paestum,Rome]{Mingjie Shao}\ead{mingjieshao@amss.ac.cn},   
\author[Paestum,Rome]{Yanlong Zhao}\ead{ylzhao@amss.ac.cn}  % (ead) as shown

\address[Paestum]{State Key Laboratory of Mathematical Sciences, Academy of Mathematics and Systems Science, Chinese Academy of Sciences, Beijing 100190, China}  % Please supply                                              
\address[Rome]{School of Mathematical Sciences, University of Chinese Academy of Sciences, Beijing 100049, China}             % full addresses
\address[Baiae]{Department of Information Engineering, University of Padova, Padova 35131, Italy}        % here.

\begin{keyword}                           % Five to ten keywords,  
System identification, one-bit communications, asymptotic efficiency, ARX systems            % chosen from the IFAC 
\end{keyword}                             % keyword list or with the 
                                          % help of the Automatica 
                                          % keyword wizard

\begin{abstract}
This paper develops an asymptotically efficient recursive identification algorithm for autoregressive systems with exogenous inputs under one-bit communications.
In particular, the proposed method asymptotically achieves the Cramér--Rao lower bound (CRLB) based on the original data before quantization (original CRLB), whereas existing approaches typically attain only the CRLB corresponding to the quantized observations.
The primary reason is that the existing methods quantize only the current system output, resulting in non-negligible information loss under one-bit quantization.
To overcome this challenge, we present a novel quantization method that integrates both current and historical system outputs and inputs to provide richer parameter information in one-bit data, allowing the information loss caused by quantization to become a minor term relative to the original CRLB.
Based on this technique, a corresponding remote estimation algorithm is further proposed.
To address the convergence analysis challenge posed by the non-independence of the one-bit data, we establish a new framework that analyzes the tail probability of integrated data formed by combining current and historical system outputs and inputs before quantization, thereby eliminating the need for the traditional independence assumption on the quantized data.
It is proven that the remote estimate achieves asymptotic normality, and the error covariance matrix converges to the original CRLB, confirming its asymptotic efficiency.
Compared to existing identification algorithms under one-bit data, this method reduces the asymptotic mean squared error by at least $1 - 2/\pi \approx 36\%$.
Several numerical examples are simulated to show the effectiveness of the proposed algorithm.
\end{abstract}

\end{frontmatter}

\section{Introduction}

\subsection{Background and motivation}

Over the past two decades, networked control systems have been widely adopted across various domains, including industrial automation \citep{Akhtar2023Developing}, wireless sensor networks \citep{jiang2024linear}, telecommand applications \citep{Mashiko2025}, and vehicle platooning \citep{zhang2025decentralized}. 
In such systems, tasks such as data acquisition, processing, and decision-making could be distributed across multiple nodes, significantly enhancing flexibility and scalability \citep{Ribeiro2006}.
However, a critical limitation lies in the constrained communication bandwidth between nodes, which often prevents remote estimators from directly accessing system inputs and outputs \citep{You2015}. 
Instead, these estimators must rely on low-bit data transmission from other nodes for system identification \citep{Wang2026}, a common scenario in practical applications:

\romannumeral1) Industrial automation \citep{Vergallo2023}: 
Internet-of-things and cloud computing technologies enable remote monitoring and diagnostics of complex industrial systems, allowing for timely detection and resolution of issues. 
However, managing the massive volume of data generated by devices within the constraints of limited bandwidth presents a significant challenge.

\romannumeral2) Wireless sensor networks \citep{Li2007}: 
In sensor networks, geographically distributed sensors collaborate to achieve system identification.  
Constraints such as high sensor costs, limited power supplies, and the significant energy consumption of communication severely limit the available communication capacity.

\romannumeral3) Telecommand applications \citep{Almohamad2025}: 
Accurate estimation of satellite attitude is essential for ground control operations. 
Nevertheless, such estimations need to be performed under stringent communication bandwidth constraints, necessitating efficient data transmission strategies.

These scenarios highlight a fundamental challenge: how to optimize estimation accuracy under strict communication resource constraints. 
In this context, the error covariance matrix serves as a crucial metric for quantifying estimation accuracy \citep{Guo2026}. 
To establish theoretical performance limits, the Cramér-Rao lower bound (CRLB) provides a fundamental benchmark \citep{Friedlander1984}, representing the minimum achievable error covariance matrix for any unbiased estimator \citep{Gustafsson2009}. 
Building upon this theoretical framework, this study aims to investigate the optimal identification problem under one-bit communications, and develop a recursive identification algorithm whose error covariance matrix converges to the CRLB, namely an asymptotically efficient recursive identification algorithm.

\subsection{Related literature}

Numerous studies have explored methods to optimize parameter estimation accuracy under one-bit data.
Within a transmission framework that utilizes a quantizer with a fixed threshold, various techniques have been proposed, including the expectation maximization method \citep{godoy2011identification, Risuleo2020, Dario2022}, the empirical measure method \citep{yin2007asymptotically}, and the quasi-Newton method \citep{wang2024asymptotically}. 
For any given threshold, the error covariance matrix of these methods converges to the CRLB for that fixed threshold. 
However, selecting the optimal threshold that minimizes the CRLB in advance remains challenging due to its dependence on the unknown system parameter \citep{wang2024threshold}, which hinders efforts to minimize the error covariance matrix within this transmission framework.

To further enhance estimation accuracy, significant attention has been given to transmission frameworks that employ adaptive quantizers due to their flexibility.
In particular, \citet{you2015recursive} introduced a co-design approach for an adaptive quantizer and a remote estimator, where the quantizer updates its threshold based on parameter estimates from the previous time.
This adaptive adjustment enables the threshold in \citet{you2015recursive} to approximate the optimal adaptive threshold \citep{wang2024threshold}.
Consequently, the error covariance matrix of the algorithm proposed by \citet{you2015recursive}  converges to the CRLB for the optimal adaptive threshold, outperforming methods that rely on fixed quantizers.

\citet{you2015recursive} and \citet{wang2024threshold} have achieved optimal estimation accuracy when quantizing only the current system output.
Nevertheless, the information loss caused by quantization in such transmission frameworks leads to asymptotic covariance matrices that are at least \(\pi/2 \approx 1.56\) times greater than the CRLB based on the original data before quantization (original CRLB) \citep{you2015recursive, wang2024threshold}.

\begin{figure*}[h]
\centering
\noindent\includegraphics[width=0.85\textwidth]{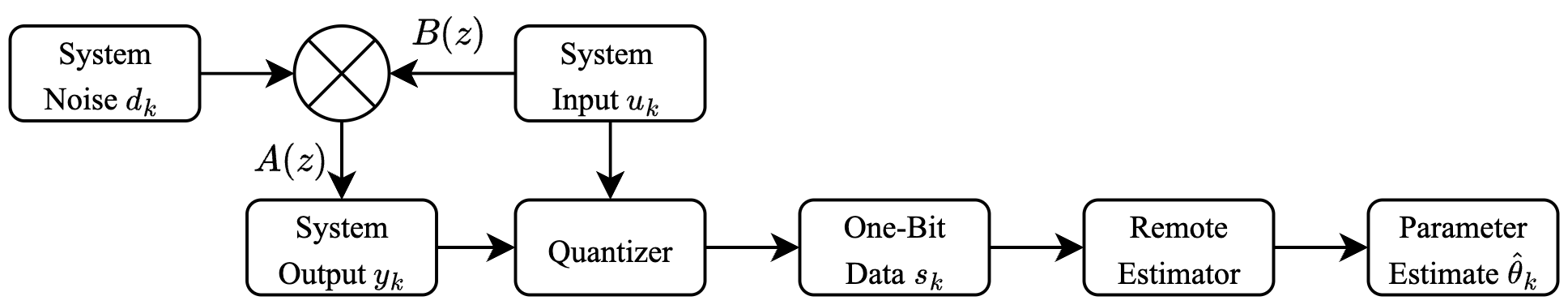}
\caption{Block diagram of the identification problem for ARX systems under one-bit communications}
\label{fig}
\end{figure*}

Moreover, in the pursuit of optimal estimation accuracy, existing recursive approaches \citep{yin2007asymptotically, wang2024asymptotically, you2015recursive}  impose stringent requirements on both the system model and the input type. 

For the system model, these algorithms are primarily designed for finite impulse response (FIR) systems and are unsuitable for optimal identification in more complex dynamic systems, such as autoregressive systems with exogenous inputs (ARX systems).
The extension from FIR systems to ARX systems is difficult because the estimator they consider receives only one-bit data quantized from the current system output, thereby failing to capture the complete system regression vector.
While some studies have explored recursive identification for ARX systems in such transmission frameworks \citep{Casini2011,csaji2012recursive}, these methods face challenges in achieving asymptotic efficiency, leaving the development of asymptotically efficient recursive identification algorithms for ARX systems under one-bit communications as an open problem.

For the input type, these analyses primarily develop along two independent directions.
The methods based on fixed quantizers are confined to deterministic input scenarios \citep{yin2007asymptotically, wang2024asymptotically}, while adaptive quantizer mainly focuses on stochastic conditions \citep{you2015recursive}.
In practice, many real-world applications, such as modal analysis of mechanical or civil engineering structures, involve mixed input signals that combine deterministic control commands and stochastic noise \citep{Reynders2008}. 
This highlights the need for a unified framework capable of handling both deterministic and stochastic inputs.

Motivated by these challenges, this paper seeks to develop a novel quantization method and a corresponding remote estimator to achieve the original CRLB, with applicability to ARX systems under both deterministic and stochastic inputs.

\subsection{Main contributions}

This paper investigates the optimal recursive identification problem for ARX systems under one-bit communication constraints.
To overcome the fundamental information loss induced by one-bit quantization, a novel joint design of a quantizer and a remote estimator is proposed.
At the quantizer side, parametric information is first extracted from the original measurements by the asymptotically efficient recursive least-squares (RLS) algorithm \citep{Liu2025}.
The quantizer then transmits one-bit information formed as the sign of the difference between the current remote estimate and the local RLS estimate.
The remote estimator completes the online estimation process based on the received one-bit data using a stochastic approximation (SA) scheme, which drives the remote estimate to track the local RLS estimate.
The main contributions can be summarized as follows:

\romannumeral1) This work develops an asymptotically efficient recursive identification algorithm under one-bit communications, whose error covariance matrix converges to the original CRLB before quantization.
By extracting parametric knowledge from both current and historical system outputs and inputs into one-bit data, the proposed approach reduces the asymptotic mean square error (MSE) by at least \(1 - 2/\pi \approx 36\%\) compared with existing asymptotically efficient algorithms that quantize only the current system output \citep{yin2007asymptotically, wang2024asymptotically, you2015recursive}.
Besides, the remote estimate is proved to converge to the true parameter in both almost sure and \(L^{p}\) sense for any positive integer \(p\).

\romannumeral2) This work extends the system model to ARX systems, in contrast to existing asymptotically efficient recursive identification algorithms that solely focus on FIR systems \citep{yin2007asymptotically, wang2024asymptotically, you2015recursive}.
The primary difficulty arises because the ARX system regressors involve the system outputs that are unavailable in the remote estimator. 
The proposed algorithm overcomes this challenge by pre-processing the data using the regressors before quantization, thereby embedding the necessary regressor information in the one-bit data and circumventing the need to rely on unavailable regressors at the remote estimator.

\romannumeral3) This work establishes a novel convergence analysis framework tailored to the non-independent one-bit data.
This framework analyzes the tail probability of integrated data formed by combining current and historical system outputs and inputs before quantization, thereby eliminating the need for the traditional independence assumption on the quantized data \citep{zhang2019asymptotically, wang2023identification, wang2024asymptotically}.
In addition to addressing the non-independent challenge, this framework accommodates both deterministic and stochastic inputs and does not rely on the common assumption in quantized recursive identification that the parameter resides within a known compact set.

\subsection{Overview and notations}

The remainder of this paper is organized as follows. 
Section \ref{sec_a} formulates the problem with ARX system structures and introduces the fundamental assumptions.
Section \ref{sec_b} focuses on the algorithm construction.
Section \ref{sec_c}  establishes the main results, where
Section \ref{sec_c1} analyzes the convergence rate for the tail probability of the difference between the remote estimate and the parametric knowledge,
Section \ref{sec_c2} demonstrates both almost sure and $L^{p}$ convergence, and
Section \ref{sec_c3} confirms the asymptotic efficiency of the remote estimate.
Section \ref{sec_e} gives simulation examples to verify the conclusion.
Section \ref{sec_f} is the summary and prospect of this paper.

In this paper, $\mathbb{R}$, $\mathbb{R}^{n}$, $\mathbb{R}^{n \times n}$, and $\mathbb{Z}$  are the sets of real numbers, $n$-dimensional real vectors and $n$-order matrices, and integers, respectively. 
For a constant $x$, $\vert x \vert$ denotes its absolute value.
For positive integers $m$ and $n$, $\mathrm{mod}(m,n)$ denotes the remainder of $m$ divided by $n$.
For a vector $a=[a_{1},a_{2},\ldots ,a_{n}] \in \mathbb{R}^{n}$, $a_{i}$ denotes its $i$-th element, $a^{T}$ denotes its transpose; $\Vert a \Vert$ denotes its Euclidean norm, i.e., $\Vert a \Vert = (\sum_{i=1}^{n} a_{i}^{2})^{1/2}$.
For a matrix $A = [a_{i,j}]$, $i = 1,2,\ldots,n$, $j=1,2,\ldots,n$, $A^{T}$ denotes its transpose; $\text{tr}(A)$ denotes its trace; $\lambda_{\max}(A)$ and $\lambda_{\min}(A)$ denote its maximum eigenvalue and minimum eigenvalue respectively; $\Vert A \Vert$ denotes its $2$-norm, i.e., $\Vert A \Vert = \sqrt{ \lambda_{\max}(A^{T}A)}$. 
$I_{n}$ is an $n$-dimensional identity matrix.
For a set $\Omega$, $\Omega^{\mathrm{c}}$ denotes its complement.
$\mathbb{P}$ denotes the probability operator.
$\mathbb{E}$ denotes the expectation operator.
The function $I_{\{\cdot\}}$ denotes the indicator function, whose value is $1$ if its argument (a formula) is true and $0$ otherwise.
$\mathcal{N}(\mu,\sigma^2)$ denotes the Gaussian distribution with mean $\mu$ and variance $\sigma^2$.

\section{Preliminaries} \label{sec_a}

\subsection{Problem formulation}

Consider an ARX system described by
\begin{align}\label{model_a}
A(q)y_{k} =  B(q)u_{k} + d_{k},\quad k \geq 1,
\end{align}
where $k$ is the time index; the system input $u_{k} \in \mathbb{R}$ and output $y_{k} \in \mathbb{R}$ are both known to the quantizer; $d_{k} \in \mathbb{R}$ is the observation noise; $A(q)$ is the $m$-th order polynomial and $B(q)$ is the $n$-th order polynomial, both expressed in terms of unit backward shift operator: $q^{-1}y_{k} = y_{k-1}$ as $A(q) \triangleq 1+ a_{1}q^{-1}+ \ldots + a_{m}q^{-m}$ and $B(q) \triangleq b_{1}q^{-1} + \ldots + b_{n}q^{-n}$; $a_{1}, \ldots, a_{m} \in \mathbb{R}$ and $b_{1}, \ldots, b_{n} \in \mathbb{R}$ are the unknown parameters; the remote estimator relies on one-bit $s_{k}$ from a quantizer to identify the system, as depicted in Fig.\ref{fig}.
Besides, we stipulated such that $y_k = 0$ for $k \leq 0$ and $u_k = 0$ for $k < 0$.

The objective of this paper is to co-design a quantizer and a remote estimator for the recursive identification of the unknown parameter \( \theta \triangleq [a_{1}, \ldots, a_{m}, b_{1}, \ldots, b_{n}]^{T} \) with the goal of optimizing estimation accuracy.

To formalize the fundamental performance limit, we first introduce the original CRLB, which characterizes the minimum achievable error covariance matrix for any unbiased estimator with the original measurements $\{y_{k}\}_{k=1}^{\infty}$.
Specifically, for any unbiased estimator $\hat{\theta}_{k}$ of $\theta$, there exists a lower bound $\Sigma_{\mathrm{CR}}(k)$ satisfying
\[
\mathbb{E}\left[\tilde{\theta}_{k}\tilde{\theta}_{k}^{T}\right] \geq \Sigma_{\mathrm{CR}}(k),
\]
where $\tilde{\theta}_{k} \triangleq \hat{\theta}_{k}-\theta$ denotes the estimation error and $\Sigma_{\mathrm{CR}}(k)$ is called the original CRLB at time $k$.

\begin{rem}
The CRLB is also adopted as a fundamental benchmark for evaluating the estimation accuracy of asymptotically unbiased estimators.
Although the finite-sample error covariance may fall below the CRLB, the asymptotic error covariance cannot be smaller than the limit of the CRLB except on a Lebesgue measure zero subset of the parameter space \citep{Van2000}.
That is, for any asymptotically unbiased estimator $\hat{\theta}_{k}$ of $\theta$,
\[
\liminf_{k \to \infty}k\mathbb{E}\left[\tilde{\theta}_{k}\tilde{\theta}_{k}^{T}\right] \geq \bar{\Sigma}_{\mathrm{CR}},  \quad \text{for almost every} \,\, \theta,
\]
where $\bar{\Sigma}_{\mathrm{CR}} = \lim_{k \to \infty}k\Sigma_{\mathrm{CR}}(k)$.
\end{rem}

Hence, our goal is to ensure that the covariance matrix of the estimation error attains the original CRLB, achieving asymptotic efficiency, which is commonly defined in two distinct ways:
\begin{defn}\label{def_asy1} Asymptotic efficiency in the distribution sense \citep{Fabian1978,you2015recursive}:
An estimate $\hat{\theta}_{k}$ of $\theta$ at time $k$ is asymptotically efficient if
\begin{align}\label{def_112}
\sqrt{k}\tilde{\theta}_{k} \xrightarrow{d}  \mathcal{N}\left(0,  \bar{\Sigma}_{\mathrm{CR}} \right).
\end{align}
\end{defn}

\begin{defn}\label{def_asy2} Asymptotic efficiency in the covariance sense \citep{Rao1961,wang2024asymptotically}:
An estimate $\hat{\theta}_{k}$ of $\theta$ at time $k$ is asymptotically efficient if
\begin{align}\label{def_1122}
\lim\limits_{k \rightarrow \infty} k\mathbb{E}\left[\tilde{\theta}_{k}\tilde{\theta}_{k}^{T} \right]=   \bar{\Sigma}_{\mathrm{CR}}.
\end{align}
\end{defn}

\begin{rem}
The concepts of asymptotic efficiency based on Definitions \ref{def_asy1} and \ref{def_asy2} are not mutually inclusive. On one hand, as stated in Appendix 9B of \citet{ljung1987theory}, an estimate that satisfies (\ref{def_112}) does not necessarily satisfy (\ref{def_1122}). On the other hand, an estimate satisfying (\ref{def_1122}) is not necessarily normally distributed, meaning it does not necessarily fulfill (\ref{def_112}).
\end{rem}

We aim to develop a recursive identification algorithm that is asymptotically efficient in both the distribution and covariance senses, enabling the covariance matrix of the estimation error to attain the original CRLB asymptotically.

\subsection{Assumptions}
To proceed with our analysis, we introduce some fundamental assumptions concerning the system model, noises, inputs, and outputs, which are primarily consistent with those in \citep{ljung1987theory}.
First, we provide the following two definitions for assumptions concerning the system outputs and inputs.

\begin{defn}\label{def_a}
For a process $\{ x_{k} \}_{k=1}^{\infty}$, the operator $\bar{\mathbb{E}}$ denotes
\[
\bar{\mathbb{E}}\left[\{  x_{k} \}_{k=1}^{\infty}\right] \triangleq \lim\limits_{k \rightarrow \infty} \frac{1}{k}\sum_{l=1}^{k}\mathbb{E}\left[x_{l}\right].
\]
\end{defn}

\begin{defn}\label{def_b}
A process $\{ x_{k} \}_{k=1}^{\infty}$ is said to be quasi-stationary if it is subject to 
\begin{align*}
& \mathbb{E}\left[x_{k} \right] = m_{k}, \quad \mathop{\sup}\limits_{k} \left\vert m_{k} \right\vert < C, \quad k \geq 1, \\
& \mathbb{E}\left[x_{i}x_{j}\right] = R_{i,j}^{x}, \quad \mathop{\sup}\limits_{i,j} \left\vert R_{i,j}^{x} \right\vert < C,  \quad i,j \geq 1, 
\end{align*}
and the covariance function
\[
R_{\tau}^{x} \triangleq \bar{\mathbb{E}}\left[\{  x_{k} x_{k-\tau}\}_{k=1}^{\infty}\right],   \quad \tau \in \mathbb{Z},
\]
exists, where $C$ is a positive constant.
\end{defn}

\begin{rem}
The definition of quasi-stationary processes, comprising both deterministic and stochastic components, is widely used in system identification \citep{ljung1987theory} and includes weakly stationary processes as a special case \citep{Park2018}.
On the one hand, quasi-stationary processes do not require the correlation function to be time-independent.
Instead, they only necessitate a convergence condition regarding the correlation. 
On the other hand, the mean of a quasi-stationary process is only required to satisfy a boundedness condition, rather than being strictly constant over time.
\end{rem}

\begin{assum}\label{ass_a}
The system input and output data set $Z^{\infty} \triangleq \{u_{0}, y_{1}, u_{1}, y_{2}, \ldots \}$ is generated by a bounded deterministic sequence $\{r_{k}\}_{k=0}^{\infty}$ and a stochastic sequence $\{e_{k}\}_{k=0}^{\infty}$ of independent random variables with zero mean values, variance $\delta_{e}^{2}$, and bounded moments of any positive integer order,
such that for some filters $\{ f_{i}^{(j)}(k) \}_{i=1}^{\infty}$, $j = 1,2,3,4$, $k \geq 1$,
\begin{align*}
& y_{k} = \sum_{i=1}^{\infty}  f_{i}^{(1)}(k)r_{k-i}  + \sum_{i=0}^{\infty} f_{i}^{(2)}(k)e_{k-i}, \\ 
& u_{k} = \sum_{i=0}^{\infty}  f_{i}^{(3)}(k)r_{k-i}  + \sum_{i=0}^{\infty} f_{i}^{(4)}(k)e_{k-i},
\end{align*}
where

\romannumeral1) Uniform stability: The family of filters $\{ f_{i}^{(j)}(k) \}_{i=1}^{\infty}$, $j = 1,2,3,4$, $k \geq 1$ is uniformly stable, i.e., there exists a filter $\{ f_{i} \}_{i=1}^{\infty}$ such that
$
\mathop{\sup}\limits_{k,j} \vert f_{i}^{(j)}(k) \vert \leq \vert f_{i} \vert
$
for all $i \geq 1$,
where $\sum_{i=1}^{\infty} \vert f_{i} \vert < \infty $;

\romannumeral2) Joint quasi-stationary:
The system output $\{y_{k}\}_{k=1}^{\infty}$ and the system input $\{u_{k}\}_{k=0}^{\infty}$ are both quasi-stationary, and, in addition, the cross-covariance function
\[
R_{\tau}^{yu} \triangleq \bar{\mathbb{E}}\left[ \{y_{k}u_{k-\tau}\}_{k=1}^{\infty}\right],  \quad \tau \in \mathbb{Z},
\]
exists;

\romannumeral3) Persistent excitation:
The system regression vector $\phi_{k} \triangleq [-y_{k-1}, \ldots, -y_{k-m}, u_{k-1}, \ldots, u_{k-n}]^{T}$ satisfies the condition that
\begin{align*}
& \bar{\mathbb{E}}\left[ \{ \phi_{k}\phi_{k}^{T}\}_{k=1}^{\infty}\right]   \\
= &
\begin{bmatrix}
R_{0}^{y} & R_{1}^{y} & \cdots  & R_{m-1}^{y} & -R_{0}^{uy} &\cdots  & -R_{n-1}^{yu}\\
R_{1}^{y} &R_{0}^{y} &\cdots  & R_{m-2}^{y} & -R_{1}^{uy} &\cdots  & -R_{n-2}^{yu}\\
\vdots & \vdots & \ddots &  \vdots &  \vdots & &  \vdots \\
R_{m-1}^{y} & R_{m-2}^{y} & \cdots &  R_{0}^{y} & -R_{m-1}^{uy} & \cdots &  -R_{0}^{uy} \\
-R_{0}^{uy} & -R_{1}^{uy} & \cdots &  -R_{m-1}^{uy} & R_{0}^{u} &\cdots  & R_{m-1}^{u} \\
\vdots & \vdots & &  \vdots &  \vdots &\ddots &  \vdots \\
-R_{n-1}^{yu} & -R_{n-2}^{yu} &\cdots  &-R_{0}^{uy} &R_{m-1}^{u} &\cdots & R_{0}^{u}
\end{bmatrix}
\end{align*}
 is a positive definite matrix.
\end{assum}

\begin{assum}
\label{ass_b}
The filter $A(q)$ has no poles on or outside the unit circle.
\end{assum}

\begin{assum}
\label{ass_c}
The observation noise $\{d_{k}\}_{k=1}^{\infty}$ is a sequence of independent Gaussian random variables with zero mean and variance $\delta_{d}^{2}$.
\end{assum}

\begin{rem}
Assumptions \ref{ass_a}-\ref{ass_c} are primarily for the conditions of Theorem 9.1 of \citet{ljung1987theory}, which represent common assumptions for the asymptotic expression of the error covariance matrix in classical identification algorithms.
The key difference in this work is the relaxation of the assumption that the system parameter resides within a known compact set.
This is achieved by extending \( \mathbb{E}[e_{k}^{8}] < \infty \) to \( \mathbb{E}[e_{k}^{p}] < \infty \) for any positive integer \( p \), which enables a more comprehensive analysis of the tail probability.
This extension has broad applicability, being suitable for many types of random variables, including but not limited to Gaussian, Laplace, Poisson, and bounded variables such as uniform variables. 
\end{rem}

\section{Algorithm design}\label{sec_b}

This section presents a recursive identification algorithm that achieves the original CRLB under one-bit communications. 
The key idea is to let the remote estimator track the RLS estimate, which is asymptotically efficient for an ARX system \citep{Liu2025}.
To implement this idea, the quantizer first locally computes the RLS estimate, which is given by
\begin{align}
& \hat{\theta}_{k}^{\text{RLS}} = \hat{\theta}_{k-1}^{\text{RLS}} + a_{k}P_{k-1}\phi_{k}\left(y_{k}-\phi_{k}^{T}\hat{\theta}_{k-1}^{\text{RLS}}\right), \label{RLS1} \\
& a_{k} = \left(1+\phi_{k}^{T}P_{k-1}\phi_{k}\right)^{-1}, \label{RLS2}  \\
& P_{k} = P_{k-1} - a_{k}P_{k-1}\phi_{k}\phi_{k}^{T}P_{k-1}, \label{RLS3} 
\end{align}
where the initial values $\hat{\theta}_0^{\text{RLS}} \in \mathbb{R}^{n+m}$, and $P_{0} \in \mathbb{R}^{(n+m)\times(n+m)}$ is a positive definite matrix.
Then, the quantizer generates the one-bit information:
\begin{align}
& s_{k} = I_{\{x_{k}^{T}\hat{\theta}_{k}^{\text{RLS}} > x_{k}^{T}\hat{\theta}_{k-1}\}} -  I_{\{x_{k}^{T}\hat{\theta}_{k}^{\text{RLS}} \leq x_{k}^{T}\hat{\theta}_{k-1}\}}, \label{Remote3} \\ 
& x_{k} = \epsilon_{\mathrm{mod}(k,m+n)+1}, \label{Remote2} 
\end{align}
where $s_{k}$ indicates the sign of the difference between the selected components of the local RLS estimate $\hat{\theta}_{k}^{\text{RLS}}$ and the remote estimate $\hat{\theta}_{k}$. Unit vector $\epsilon_{i}$ is used to select the \((\mathrm{mod}(k,m+n)+1)\)-th component at each time step \(k\).
It is defined such that its $i$-th element is $1$  and all remaining elements are $0$ for $i = 1,2,\ldots,n+m$.

Using this one-bit information, the remote estimator updates its estimate via a SA scheme
\begin{align}
 \hat{\theta}_{k} = \hat{\theta}_{k-1} + \beta_{k}x_{k}\frac{s_{k}}{k^{\alpha}}, \label{Remote1} 
\end{align}
where the initial value $\hat{\theta}_0 \in \mathbb{R}^{n+m}$, $1/2 < \alpha < 1$ and $0 < \uline{\beta} \leq \beta_{k} \leq \bar{\beta} < \infty$ are the step-size coefficients which can be designed.

Combining these considerations, we construct the proposed recursive RLS–SA estimator in Algorithm 1.

\begin{algorithm}
\caption{RLS–SA Algorithm}
\label{alg_b_pseudo}
\begin{algorithmic}[1]
\State \textbf{Require:} Initial estimates $\hat{\theta}_0\in\mathbb{R}^{n+m}$, $\hat{\theta}^{\mathrm{RLS}}_0\in\mathbb{R}^{n+m}$, and $P_0\in\mathbb{R}^{(n+m)\times(n+m)}$; step-size coefficients $0 < \uline{\beta} \leq \beta_{k} \leq \bar{\beta} < \infty$ and  $1/2<\alpha < 1$.
\State \textbf{Ensure:} Remote estimate $\hat{\theta}_k$.
\For{$k=1,2,\ldots$}
    \State \textbf{Quantizer side:}
    \State Compute RLS gain $a_k$ by (\ref{RLS2})
    \State Update RLS estimate $\hat{\theta}^{\mathrm{RLS}}_k$ by (\ref{RLS1})
    \State Update RLS covariance matrix $P_k$ by (\ref{RLS3})
    \State Generate $x_k$ by (\ref{Remote2})
    \State Generate one-bit message $s_k$ by (\ref{Remote3})
    \State Update remote estimate $\hat{\theta}_k$ by (\ref{Remote1})
    \State Transmit $s_k$ to remote estimator
    \State \textbf{Remote estimator side:}
    \State Receive one-bit message $s_k$ from quantizer
    \State Generate $x_k$ by (\ref{Remote2})
    \State Update remote estimate $\hat{\theta}_k$ by (\ref{Remote1})
\EndFor
\end{algorithmic}
\end{algorithm}

\begin{rem}
The coding problem for the time-varying RLS estimate considered in this work is fundamentally different from that for a time-invariant parameter \citep{Como2010Anytime, Shirazinia2011Anytime}.
Due to the presence of stochastic noise, the convergence rate of $\hat{\theta}_{k}^{\mathrm{RLS}} - \hat{\theta}_{k-1}^{\mathrm{RLS}}$ is only at a polynomial rate. 
Since the convergence rate of the tracking error $\hat{\theta}_{k}^{\mathrm{RLS}} - \hat{\theta}_{k}$ cannot be faster than that of $\hat{\theta}_{k}^{\mathrm{RLS}} - \hat{\theta}_{k-1}^{\mathrm{RLS}}$, it is therefore at most polynomial. 
Consequently, classical coding schemes designed for time-invariant parameters cannot achieve traditional exponential tracking performance in this setting, as they typically rely on exponential step-size decays. 
Moreover, the RLS estimation error sequence is non-independent, while existing theories typically model such errors as independent noise sequences \citep{yin2007asymptotically, wang2024asymptotically, you2015recursive}.
\end{rem}

\section{Main results}\label{sec_c}
This section analyzes the asymptotic efficiency of the RLS-SA algorithm and establishes its key convergence properties, including almost sure and \(L^{p}\) convergence.
To examine the difference between the remote estimate and the RLS estimate, we define the auxiliary variable $\omega_{k} \triangleq x_{k}^{T}\hat{\theta}_{k} - x_{k}^{T}\hat{\theta}^{\text{RLS}}_{k+m+n}$ and aim to conduct a detailed analysis of its tail probability.
This approach enables a precise characterization of the algorithm's performance, thereby avoiding the inaccuracies introduced by classical approximation methods such as modeling quantization noise as a Gaussian variable \citep{Dey2014}.
The analyzes proceed as follows:

\romannumeral1) The convergence rate of the tail probability of $\omega_{k}$ is first established.

\romannumeral2) Based on the tail probability analysis, we establish the almost sure convergence and \(L^{p}\) convergence for any positive integer $p$ of the remote estimate.  

\romannumeral3) Finally, the asymptotic efficiency of the RLS-SA algorithm is established.

\subsection{The difference between the remote estimate and the RLS estimate}\label{sec_c1}

\begin{thm}\label{theorem1}
Under Assumptions \ref{ass_a}-\ref{ass_c}, 
\begin{align}\label{main}
\mathbb{P}\left(\left\vert \omega_{k} \right\vert > \frac{\rho}{k^{\alpha}} \right) = O\left(\frac{1}{k^{p}}\right),
\end{align}
where  $p$ is an arbitrary positive integer and $\rho \triangleq \bar{\beta} + 2\uline{\beta}/3$.
\end{thm}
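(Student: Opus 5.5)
The plan is to reduce \eqref{main} to a scalar, almost deterministic tracking recursion for each coordinate, and then to control the only random input, the RLS increments, by high-probability bounds. Write $N\triangleq m+n$. Since $x_k=\epsilon_{\mathrm{mod}(k,N)+1}$, the component selected at time $k$ is updated again only at time $k+N$, so $x_{k+N}^{T}\hat{\theta}_{k+N-1}=x_k^{T}\hat{\theta}_k$. By \eqref{Remote3} this gives $s_{k+N}=-\mathrm{sgn}(\omega_k)$, with the tie $\omega_k=0$ giving $s_{k+N}=-1$. Substituting \eqref{Remote1} then yields the scalar recursion
\begin{align*}
\omega_{k+N}=\omega_k-\frac{\beta_{k+N}}{(k+N)^{\alpha}}\,\mathrm{sgn}(\omega_k)+\delta_k,\qquad \delta_k\triangleq x_k^{T}\bigl(\hat{\theta}^{\text{RLS}}_{k+N}-\hat{\theta}^{\text{RLS}}_{k+2N}\bigr).
\end{align*}
This decouples into $N$ interleaved scalar sign-SA recursions, each tracking a slowly moving target.

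\textbf{Step 1 (RLS increments are negligible with high probability).} I will show that for every $\gamma<1$ and every integer $p$, $\mathbb{P}(|\delta_t|>t^{-\gamma})=O(t^{-p})$, and that $\mathbb{P}(\|\hat{\theta}^{\text{RLS}}_t-\theta\|>1)=O(t^{-p})$. From \eqref{RLS1}--\eqref{RLS3}, each one-step increment is $P_t\phi_{t+1}d_{t+1}/(1+\phi_{t+1}^{T}P_t\phi_{t+1})$ up to the usual rewriting, where $P_t^{-1}=P_0^{-1}+\sum_{l\le t}\phi_l\phi_l^{T}$.

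Two ingredients are needed. The first is $\lambda_{\min}(P_t^{-1})\ge ct$ with probability $1-O(t^{-p})$. This follows from persistent excitation (Assumption \ref{ass_a} iii) once $\|\frac1t\sum_{l\le t}(\phi_l\phi_l^{T}-\mathbb{E}\phi_l\phi_l^{T})\|$ is shown to be small with polynomial tail. I will get this from Markov's inequality applied to high-order moments. Those moments are bounded using the uniformly stable linear-process representation of Assumption \ref{ass_a} in terms of independent $e_k$ with all moments finite, together with Burkholder/Rosenthal inequalities for the resulting sums; the truncation of the infinite filters is handled by $\sum|f_i|<\infty$. The second ingredient is that $\|\phi_t\|$ and $|d_t|$ exceed $t^{\varepsilon}$ only with probability $O(t^{-p})$, again by Markov's inequality and the moment bounds.

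Combining the two ingredients gives $|\delta_t|\le C t^{-1+2\varepsilon}$ off an event of probability $O(t^{-p})$. Since $\alpha<1$, choosing $\varepsilon$ small gives $|\delta_t|\le \uline{\beta}/(6t^{\alpha})$ for large $t$. For the bound on $\|\hat{\theta}^{\text{RLS}}_t-\theta\|$, I will use the same $\lambda_{\min}$ event together with a moment bound on the martingale $\sum_{l\le t}\phi_l d_l$.

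\textbf{Step 2 (deterministic invariance and attraction).} Fix a large $t$ and work on the event that $|\delta_t|\le \uline{\beta}/(6t^{\alpha})$. I will show that $|\omega_t|\le\rho/t^{\alpha}$ implies $|\omega_{t+N}|\le\rho/(t+N)^{\alpha}$, by splitting into two cases.
\begin{itemize}
\item If $|\omega_t|\le \bar{\beta}/t^{\alpha}$, the sign step overshoots by at most $\bar{\beta}/t^{\alpha}$.
\item If $|\omega_t|> \bar{\beta}/t^{\alpha}$, then $|\omega_t|$ decreases by at least $\uline{\beta}/(t+N)^{\alpha}$.
\end{itemize}
In both cases the slack $2\uline{\beta}/3$ built into $\rho$ absorbs $\delta_t$ and the shrinkage $t^{-\alpha}-(t+N)^{-\alpha}=O(t^{-\alpha-1})$. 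This is exactly why $\rho=\bar{\beta}+2\uline{\beta}/3$.

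Conversely, while $|\omega_t|>\rho/t^{\alpha}$, each step decreases $|\omega_t|$ by at least $\uline{\beta}/(2t^{\alpha})$. Over the times $t\equiv k \pmod N$ in $[k_0,k]$, these decrements sum to at least $c\,(k^{1-\alpha}-k_0^{1-\alpha})$.

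\textbf{Step 3 (initial size and union bound).} Take $k_0=\lfloor k^{\eta}\rfloor$ with $\eta$ small. Deterministically $|x^{T}\hat{\theta}_{k_0}|\le|\hat{\theta}_0|+\bar{\beta}\sum_{t\le k_0}t^{-\alpha}=O(k_0^{1-\alpha})$, and with probability $1-O(k^{-p})$ the RLS term is bounded. Hence $|\omega_{k_0}|\le C k^{\eta(1-\alpha)}$, which is much smaller than the total drift $c k^{1-\alpha}$. So on the intersection of the good events for all $t\in[k_0,k]$, $\omega$ enters the band $\rho/t^{\alpha}$ before time $k$ and stays there by Step 2.

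The bad events have probability $\sum_{t\ge k_0}O(t^{-p'})=O(k_0^{1-p'})=O(k^{-\eta(p'-1)})$. Since $p'$ is arbitrary, taking $p'$ large enough gives $O(k^{-p})$.

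\textbf{Main obstacle.} I expect the main difficulty to be Step 1, specifically the polynomial-tail lower bound on $\lambda_{\min}(\sum_{l\le t}\phi_l\phi_l^{T})$ under mere quasi-stationarity with mixed deterministic and stochastic inputs. My first attempt will be high-moment bounds for quadratic forms of the linear processes in the independent $e_k$, as sketched in Step 1.
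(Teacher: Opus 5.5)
Your proposal is correct and follows the paper's proof. It uses the same $(m+n)$-step recursion for $\omega_k$ with $s_{k+m+n}=-\mathrm{sgn}(\omega_k)$ (the paper's \eqref{eq1}). It controls the RLS increments with high probability through the same tail bounds on $P_k$, $\phi_k$, $d_k$ and $\tilde{\theta}^{\text{RLS}}_k$ (Part 1), bounds the initial size with the same deterministic growth estimate on $\Vert\hat{\theta}_k\Vert$ (Part 2), and concludes with the same drift-into-the-band-then-invariance argument and a union bound (Parts 3--4).

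The differences are minor. You start the drift at $k^{\eta}$ rather than at $\lfloor k/c_H\rfloor$. This makes the initial size negligible against the accumulated drift and avoids the paper's bookkeeping with $c_T$ and $c_H$, at the harmless cost of tail exponents of order $p/\eta$. Your Step 1 also proposes to re-prove the $\lambda_{\min}$ and $L^{p}$ tail bounds for RLS, which the paper imports from \citet{Liu2025} as Lemmas \ref{lemma_b12} and \ref{lemma_aaa444}.
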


\begin{pf}
Note that, at the \(k\)-th step, the RLS-SA algorithm adjusts only the value of the \((\text{mod}(k, m+n) + 1)\)-th dimension.
Consequently, the values for $\hat{\theta}_{k-(m+n)}$, $\hat{\theta}_{k-(m+n)+1}$, $\hat{\theta}_{k-1}$ at the \((\text{mod}(k, m+n) + 1)\)-th dimension remain unchanged.
This leads to
\begin{align}\label{sss222333iii}
x_{k}^{T}\hat{\theta}_{k-1} = x_{k}^{T}\hat{\theta}_{k-(m+n)}.
\end{align}

Then, together with $\Vert x_{k}\Vert= 1$, $x_{k} = x_{k-(m+n)}$ and (\ref{RLS1}), it holds that
\begin{align*}
\omega_{k}
= & x_{k-(m+n)}^{T}\hat{\theta}_{k-(m+n)} + \beta_{k}\frac{s_{k}}{k^{\alpha}} - x_{k}^{T}\hat{\theta}^{\text{RLS}}_{k+m+n} \\
= & x_{k-(m+n)}^{T}\hat{\theta}_{k-(m+n)} + \beta_{k}\frac{s_{k}}{k^{\alpha}} - x_{k-(m+n)}^{T}\hat{\theta}^{\text{RLS}}_{k} \\
& - \sum_{l=k}^{k+(m+n)-1} a_{l+1}x_{k}^{T}P_{l}\phi_{l+1}\left(y_{l+1}-\phi_{l+1}^{T}\hat{\theta}_{l}^{\text{RLS}}\right) \\
= & x_{k-(m+n)}^{T}\hat{\theta}_{k-(m+n)} + \beta_{k}\frac{s_{k}}{k^{\alpha}} - x_{k-(m+n)}^{T}\hat{\theta}^{\text{RLS}}_{k} \\
& - \sum_{l=k}^{k+(m+n)-1} a_{l+1}x_{k}^{T}P_{l}\phi_{l+1}\left(\phi_{l+1}^{T}\theta+d_{l+1}-\phi_{l+1}^{T}\hat{\theta}_{l}^{\text{RLS}}\right) \\
= & \omega_{k-(m+n)}  + \beta_{k}\frac{s_{k}}{k^{\alpha}}  + \sum_{l=k}^{k+(m+n)-1}a_{l+1} x_{k}^{T}P_{l}\phi_{l+1}\phi_{l+1}^{T}\tilde{\theta}_{l}^{\text{RLS}} \\
& - \sum_{l=k}^{k+(m+n)-1} a_{l+1}x_{k}^{T}P_{l}\phi_{l+1}d_{l+1},
\end{align*}
where $\tilde{\theta}_{k}^{\text{RLS}} \triangleq \hat{\theta}_{k}^{\text{RLS}} - \theta$ is the RLS estimation error.

Define $\eta_{k}  \triangleq  \sum_{l=k}^{k+(m+n)-1}a_{l+1} x_{k}^{T}P_{l}\phi_{l+1}\phi_{l+1}^{T}\tilde{\theta}_{l}^{\text{RLS}}$ and $\zeta_{k} \triangleq - \sum_{l=k}^{k+(m+n)-1} a_{l+1}x_{k}^{T}P_{l}\phi_{l+1}d_{l+1}.$
Then, $\omega_{k}$ can be expressed as
\begin{align}\label{eq1}
\omega_{k} = \omega_{k-(m+n)}  + \beta_{k}\frac{s_{k}}{k^{\alpha}} + \eta_{k} + \zeta_{k}.
\end{align}

Based on \eqref{eq1}, the remainder of the proof is organized into the following four parts.

$\mathbf{Part}$ $\mathbf{1:}$ We will prove 
\begin{align}\label{AAA}
\mathbb{P}\left( \bigcup_{l=k}^{\infty} \left(\mathrm{A}_{l}\cup\mathrm{B}_{l}\right)\right) = O\left(\frac{1}{k^{p}}\right),
\end{align}
where $\mathrm{A}_{k} \triangleq \{ \vert \eta_{k} \vert > \uline{\beta}/(3k^{\alpha})\}$;  $\mathrm{B}_{k} \triangleq \{ \vert \zeta_{k} \vert > \uline{\beta}/(3k^{\alpha})\}$.

Since $\vert a_{l+1} \vert < 1$, we have
\begin{align}\label{sseerr1}
\left\vert \eta_{k} \right\vert \leq \sum_{l=k}^{k+(m+n)-1} \left\Vert P_{l} \right\Vert \left\Vert\phi_{l+1}\right\Vert^{2}\left\Vert\tilde{\theta}_{l}^{\text{RLS}}\right\Vert.
\end{align}
Define $ \mathrm{C}_{l} \triangleq \{ \Vert P_{l} \Vert > (2\Vert (\bar{\mathbb{E}}[ \{ \phi_{k}\phi_{k}^{T}\}_{k=1}^{\infty}])^{-1} \Vert)/l \}$, $\mathrm{D}_{l} \triangleq \{\Vert \phi_{l+1} \Vert  > l^{(1-\alpha)/2} \}$ and $\mathrm{E}_{l} \triangleq \{ \Vert \tilde{\theta}_{l}^{\text{RLS}} \Vert > \uline{\beta}/(6(m+n)\Vert (\bar{\mathbb{E}}[  \{ \phi_{k}\phi_{k}^{T}\}_{k=1}^{\infty}])^{-1} \Vert) \}$.
By Lemma \ref{lemma_b12} and Lemma \ref{lemma_b21} in Appendix \ref{aap_a}, one can get $\mathbb{P}(\mathrm{C}_{l}) = O(1/l^{2p})$ and  $\mathbb{P}(\mathrm{D}_{l}) = O(1/l^{2p})$.
Besides, by Lemma \ref{lemma_aaa444} in Appendix \ref{aap_b} and the Markov inequality \citep{Loeve1977}, it holds that
$\mathbb{P}( \mathrm{E}_{l} ) \leq (6^{4p}(m+n)^{4p}\Vert (\bar{\mathbb{E}}[ \{ \phi_{k}\phi_{k}^{T}\}_{k=1}^{\infty}])^{-1} \Vert^{4p}\mathbb{E}[\Vert \tilde{\theta}_{l}^{\text{RLS}} \Vert^{4p}] )\\/ \uline{\beta}^{4p} = O(1/l^{2p}).$
By (\ref{sseerr1}), DeMorgan's Law \citep{casella2001statistical} and Boole's Inequality \citep{casella2001statistical}, 
\begin{align}\label{asdssd1x}
\mathbb{P}\left( \mathrm{A}_{k} \right) & = 1 -  \mathbb{P}\left( \vert \eta_{k} \vert \leq \frac{\uline{\beta}}{3k^{\alpha}}\right) \nonumber\\
& \leq 1 - \mathbb{P}\left( \bigcap_{l=k}^{k+(m+n)-1} \left( \mathrm{C}_{l}^{\mathrm{c}} \cap \mathrm{D}_{l}^{\mathrm{c}} \cap \mathrm{E}_{l}^{\mathrm{c}} \right) \right)\nonumber\\
& = \mathbb{P}\left( \bigcup_{l=k}^{k+(m+n)-1} \left( \mathrm{C}_{l} \cup \mathrm{D}_{l} \cup \mathrm{E}_{l} \right) \right)\nonumber \\
& = O\left(\frac{1}{k^{2p}}\right).
\end{align}
Since $d_{k} \sim \mathcal{N}(0,\delta_{d}^{2})$, for the positive constant $r_d \triangleq 4p/(1-\alpha)$, $\mathbb{E}[\vert d_{k+1} \vert ^{r_d}] < \infty.$ 
Define $\mathrm{F}_{l} \triangleq \{ \vert d_{l+1} \vert > \uline{\beta}l^{(1-\alpha)/2}/(6(m+n)\Vert (\bar{\mathbb{E}}[\{ \phi_{k}\phi_{k}^{T}\}_{k=1}^{\infty}])^{-1} \Vert) \}$.
Then, by the Markov inequality \citep{Loeve1977}, we have $\mathbb{P}( \mathrm{F}_{l}) \leq (6^{r_d}(m+n)^{r_d}\Vert (\bar{\mathbb{E}}[\{ \phi_{k}\phi_{k}^{T}\}_{k=1}^{\infty}])^{-1} \Vert^{r_d}\mathbb{E}[\vert d_{k+1} \vert^{r_d}] )\\/( \uline{\beta}^{r_d} l^{r_d (1-\alpha)/2})  = O(1/l^{2p}).$

Note that
$\Vert \zeta_{k} \Vert \leq \sum_{l=k}^{k+(m+n)-1} \vert a_{l+1} \vert \Vert x_{k} \Vert \Vert P_{l} \Vert \Vert\phi_{l+1}\Vert \\ \vert d_{l+1}\vert \leq \sum_{l=k}^{k+(m+n)-1} \Vert P_{l} \Vert \Vert\phi_{l+1}\Vert\vert d_{l+1}\vert.$
Then, by DeMorgan's Law \citep{casella2001statistical} and Boole's Inequality \citep{casella2001statistical}, similarly to (\ref{asdssd1x}), one can get
\begin{align}\label{asdssd2x}
\mathbb{P}\left(  \mathrm{B}_{k} \right) & \leq \sum_{l=k}^{k+(m+n)-1} \mathbb{P}\left(\mathrm{C}_{l}\right) + \mathbb{P}\left(\mathrm{D}_{l}\right) + \mathbb{P}\left(\mathrm{F}_{l}\right)\nonumber\\ &= O\left(\frac{1}{k^{2p}}\right).
\end{align}
Hence, by (\ref{asdssd1x}) and (\ref{asdssd2x}), we have $\mathbb{P}( \cup_{l=k}^{\infty} (\mathrm{A}_{l}\cup\mathrm{B}_{l})) = O(\sum_{l=k}^{\infty}1/l^{2p}) = O(1/k^{2p-1}).$
Since $p$ is a positive integer, we have $2p-1 \geq p$, which indicates (\ref{AAA}) holds.

$\mathbf{Part}$ $\mathbf{2:}$ We will prove
\begin{align}\label{CCC}
\mathbb{P}\left( \left\vert \omega_{k} \right\vert > 2c_{x}k^{1-\alpha} \right) = O\left(\frac{1}{k^{p}}\right),
\end{align}
where $c_{x} \triangleq \Vert \hat{\theta}_{0} \Vert + \bar{\beta} + \bar{\beta}/(1-\alpha)$.

Since $\Vert x_{k} \Vert = 1$, $\vert s_{k} \vert = 1$ hold uniformly, by $\hat{\theta}_{k} = \hat{\theta}_{k-1} + \beta_{k}x_{k}s_{k}/k^{\alpha}$, we have
$\Vert \hat{\theta}_{k} - \hat{\theta}_{k-1} \Vert \leq \beta_{k}/k^{\alpha} \leq \bar{\beta}/k^{\alpha}$ holds uniformly.
Thus, 
\begin{align*}
\left\Vert \hat{\theta}_{k} \right\Vert & \leq \Vert \hat{\theta}_{0} \Vert + \bar{\beta}\sum_{l=1}^{k} \frac{1}{l^{\alpha}}  \leq \Vert \hat{\theta}_{0} \Vert + \bar{\beta}\left( 1+ \int_{1}^{k} \frac{1}{x^\alpha}dx \right) \\
& = \Vert \hat{\theta}_{0} \Vert + \bar{\beta}\left( 1+ \frac{k^{1-\alpha}-1}{1-\alpha} \right) \leq  c_{x} k^{1-\alpha}.
\end{align*}
Then, by $\Vert x_{k} \Vert = 1$, one can get
\begin{align}\label{uuuiii}
\left\vert x_{k}^{T}\hat{\theta}_{k} \right\vert \leq \left\Vert x_{k} \right\Vert \left\Vert \hat{\theta}_{k} \right\Vert \leq c_{x}k^{1-\alpha}.
\end{align}
Hence, by $\omega_{k} = x_{k}^{T}\hat{\theta}_{k} - x_{k}^{T}\hat{\theta}^{\text{RLS}}_{k+m+n}$, one can get
\begin{align}\label{tt55uu}
\mathbb{P}\left(\left\vert \omega_{k} \right\vert > 2c_{x}k^{1-\alpha}\right) & \leq \mathbb{P}\left(\left\vert x_{k}^{T}\hat{\theta}^{\text{RLS}}_{k+m+n} \right\vert > c_{x}k^{1-\alpha}\right) \nonumber \\
& \leq \mathbb{P}\left(\left\Vert \hat{\theta}^{\text{RLS}}_{k+m+n} \right\Vert > c_{x}k^{1-\alpha}\right).
\end{align}
Since $\hat{\theta}_{k+m+n}^{\text{RLS}} = \tilde{\theta}_{k+m+n}^{\text{RLS}} + \theta$, and for sufficiently large $k$, $\Vert \theta \Vert < c_{x}k^{1-\alpha}/2$, by DeMorgan's Law \citep{casella2001statistical} and Boole's Inequality \citep{casella2001statistical}, we have $\mathbb{P}(\Vert \hat{\theta}^{\text{RLS}}_{k+m+n} \Vert > c_{x}k^{1-\alpha}) 
\leq \mathbb{P}(\Vert \tilde{\theta}^{\text{RLS}}_{k+m+n} \Vert > c_{x}k^{1-\alpha}/2).$
Then, by Lemma \ref{lemma_aaa444} in Appendix \ref{aap_b} and the Markov inequality \citep{Loeve1977}, $ \mathbb{P}( \Vert \tilde{\theta}_{k+m+n}^{\text{RLS}}  \Vert > c_{x}k^{1-\alpha}/2) \leq  (2^{\lceil p/(1-\alpha) \rceil}\mathbb{E}[\Vert \tilde{\theta}_{k+m+n}^{\text{RLS}} \Vert^{\lceil p/(1-\alpha) \rceil}]) \\ /(c_{x}^{\lceil p/(1-\alpha) \rceil} k^{\lceil p/(1-\alpha) \rceil(1-\alpha)}) = O(1/k^{p}).$
Besides, when $k$ is sufficiently large, $\Vert \theta \Vert < c_{x}k^{1-\alpha}/2$, which implies that $\mathbb{P}(\Vert \hat{\theta}^{\text{RLS}}_{k+m+n} \Vert > c_{x}k^{1-\alpha}) = O(1/k^{p})$.
Then, by (\ref{tt55uu}), one can get (\ref{CCC}) holds.

$\mathbf{Part}$ $\mathbf{3:}$ 
We will prove that, for sufficiently large $k$ and $t = \lfloor k/c_H \rfloor + \text{mod}(k- \lfloor k/c_H\rfloor,m+n) - (m+n)$, 
\begin{align}\label{incur}
\left\vert \omega_{k} \right\vert \leq \frac{\rho}{k^{\alpha}}, \quad \text{on} \quad \mathrm{G}_{t}^{\mathrm{c}},
\end{align}
where $c_{H} \triangleq 1+(m+n)c_{T}$, $c_{T} \triangleq ( (2(m+n)(1-\alpha)c_{x})/\varrho + 2^{1-\alpha} )^{1/(1-\alpha)}$ and $\varrho \triangleq \uline{\beta}/3$, and $\mathrm{G}_{t} \triangleq (\cup_{l=t}^{\infty}(\mathrm{A}_{l}\cup\mathrm{B}_{l}))\cup \{ \vert \omega_{t} \vert > 2c_{x}t^{1-\alpha} \}$.

The analysis in this part is carried out on the set $\mathrm{G}_{t}^{\mathrm{c}}$. 
For simplicity of description, we will not emphasize this point in this part. 

By (\ref{sss222333iii}), one can get $s_{k}
= I_{\{x_{k}^{T}\hat{\theta}_{k}^{\text{RLS}} - x_{k}^{T}\hat{\theta}_{k-(m+n)}>0\}}  -  I_{\{x_{k}^{T}\hat{\theta}_{k}^{\text{RLS}} - x_{k}^{T}\hat{\theta}_{k-(m+n)} \leq 0\}}$.
Thus, 
$
s_{k} = I_{\{ \omega_{k-(m+n)} < 0\}}-I_{\{ \omega_{k-(m+n)} \geq 0\}}.
$
Then, for $l \geq t$:

Since $\vert \eta_{l} \vert \leq \uline{\beta}/(3l^{\alpha})$ and $\vert \zeta_{l} \vert \leq \uline{\beta}/(3l^{\alpha})$, by (\ref{eq1}), it follows that
 \begin{align}
\label{tt3} \frac{\varrho}{l^{\alpha}} \leq \left\vert \omega_{l} - \omega_{l-(m+n)} \right\vert \leq \frac{\rho}{l^{\alpha}}.
\end{align}
Since $\vert \eta_{l} \vert + \vert \zeta_{l} \vert 
< \vert \beta_{l}s_{l}/l^\alpha \vert$, by (\ref{eq1}), the sign of $\omega_{l} - \omega_{l-(m+n)}$ is the same as that of $s_l$.
Moreover, since the sign of $\omega_{l-(m+n)}$ is opposite to that of $s_l = I_{\{ \omega_{l-(m+n)} < 0\}}-I_{\{ \omega_{l-(m+n)} \geq 0\}}$, we obtain 
\begin{align}\label{tt31}
 (\omega_{l} - \omega_{l-(m+n)})\omega_{l-(m+n)} \leq 0.
\end{align}
Besides, by (\ref{tt3}), when $\vert \omega_{l-(m+n)} \vert$ exceeds the maximal possible correction step $\rho/l^{\alpha}$, the sign of $\omega_{l-(m+n)}$ is the same as that of $\omega_{l}$.
In this case, since $(\omega_{l} - \omega_{l-(m+n)})\omega_{l-(m+n)} \leq 0$, one can get $\vert \omega_{l}\vert < \vert \omega_{l-(m+n)} \vert$.
Furthermore, since the correction step admits a positive lower bound $\varrho/l^{\alpha}$, one can get 
\begin{align}\label{tt23}
\left\vert \omega_{l} \right\vert < \left\vert \omega_{l-(m+n)} \right\vert - \frac{\varrho}{l^{\alpha}}.
\end{align}
By (\ref{tt23}), we claim that there exists a nonnegative integer $T(t)$ such that
\begin{align}
\left\vert \omega_{t+(m+n)T(t)} \right\vert \leq \frac{\rho}{(t+(m+n)T(t))^{\alpha}}.
\end{align}
Otherwise, if $\vert \omega_{t+(m+n)l} \vert > \rho/(t+(m+n)l)^{\alpha}$ for all $l\ge0$, by (\ref{tt23}), for a large integer $L$, we have 
$
\vert \omega_{t+(m+n)L} \vert
\le \vert\omega_t \vert -\sum_{l=1}^{L} \varrho/(t+(m+n)l)^{\alpha}.
$
Since $1/2<\alpha<1$ implies $\sum_{l=1}^{\infty}(t+(m+n)l)^{-\alpha}=\infty$, $\vert \omega_{t+(m+n)L} \vert < 0$ for a sufficient large $L$, which concludes a contradiction.

Define $T^{*}(t) \triangleq \min\{l : \vert \omega_{t+(m+n)l} \vert \leq\rho/(t+(m+n)l)^{\alpha}\}$. 
By (\ref{tt23}), we have 
\begin{align}\label{neweq1}
& \left\vert \omega_{t} \right\vert - \left\vert \omega_{t + (m+n)T^{*}(t)} \right\vert 
> \sum_{l=1}^{T^{*}(t)} \frac{\varrho}{(t+(m+n)l)^{\alpha}} \nonumber \\
> & \int_{2}^{T^{*}(t) +1}\frac{\varrho }{(t+(m+n)x)^{\alpha}} dx \nonumber \\
= & \frac{\varrho((t+(m+n)(T^{*}(t) +1))^{1-\alpha}-(t+(m+n)2)^{1-\alpha}) }{(m+n)(1-\alpha)}.
\end{align}
Besides, by $\vert \omega_{t} \vert \leq 2c_{x}t^{1-\alpha}$, it holds that
\begin{align}\label{neweq2}
\left\vert \omega_{t} \right\vert - \left\vert \omega_{t + (m+n)T^{*}(t) }  \right\vert \leq 2c_{x}t^{1-\alpha}.
\end{align}
Thus, by (\ref{neweq1}) and (\ref{neweq2}), we have 
\begin{align*}
2c_{x}t^{1-\alpha} \geq & \frac{\varrho(t+(m+n)(T^{*}(t) +1))^{1-\alpha}}{(m+n)(1-\alpha)} \\ & - \frac{\varrho(t+(m+n)2)^{1-\alpha}}{(m+n)(1-\alpha)}.
\end{align*}
Since $\alpha < 1$, $\varrho/((m+n)(1-\alpha)) > 0$. It follows that 
\begin{align*}
& \frac{2(m+n)(1-\alpha)c_{x}t^{1-\alpha}}{ \varrho} \\ \geq &(t+(m+n)(T^{*}(t) +1))^{1-\alpha} - (t+(m+n)2)^{1-\alpha} \\
> & (T^{*}(t) )^{1-\alpha} - (t+(m+n)2)^{1-\alpha}.
\end{align*}
Hence, $T^{*}(t)  < ( 2(1-\alpha)(m+n)c_{x}t^{1-\alpha}/\varrho + (t+(m+n)2)^{1-\alpha})^{1/(1-\alpha)} $.
Since $k$ is sufficiently large, $t \geq 2(m+n)$.
Thus, one can get $(t+(m+n)2)^{1-\alpha} \leq (2t)^{1-\alpha} = 2^{1-\alpha}t^{1-\alpha}$, which indicates that
\begin{align}
T^{*}(t) & <  \left( \frac{2(m+n)(1-\alpha)c_{x}t^{1-\alpha}}{\varrho} + 2^{1-\alpha} t^{1-\alpha}\right)^{1/(1-\alpha)} \nonumber \\
& = \left( \left(\frac{2(m+n)(1-\alpha)c_{x}}{\varrho} + 2^{1-\alpha}\right) t^{1-\alpha}\right)^{1/(1-\alpha)} \nonumber \\
& = \left( \frac{2(m+n)(1-\alpha)c_{x}}{\varrho} + 2^{1-\alpha}\right)^{1/(1-\alpha)}t = c_{T}t.
\end{align} 
In addition, by (\ref{tt3}) and (\ref{tt31}), for the time index $T^{*}(t) +1$, we have 
\begin{align*}
& \left\vert \omega_{t+(m+n)(T^{*}(t) +1)} \right\vert \nonumber \\
\leq &\max\bigg\{ \frac{\rho}{(t+(m+n)(T^{*}(t) +1))^{\alpha}},\nonumber \\
& \frac{\rho}{(t+(m+n)T^{*}(t) )^{\alpha}} - \frac{\varrho}{(t+(m+n)(T^{*}(t) +1))^{\alpha}} \bigg\}.
\end{align*}
Since $k$ is sufficiently large, $t \geq (m+n)/((1+\varrho/\rho)^{1/\alpha}-1)$.
Thus, one can get $1/((1+\varrho/\rho)^{1/\alpha}-1) - t/(m+n) \leq 0$, which implies that 
$T^{*}(t)  > 1/((1+\varrho/\rho)^{1/\alpha}-1) - t/(m+n)$.
Thus,
\[
t + \left(m+n\right)T^{*}(t) > \frac{m+n}{(1+\varrho/\rho)^{1/\alpha}-1}.
\]
Since $\varrho/\rho > 0$, we have $((1+\varrho/\rho)^{1/\alpha}-1) (t + (m+n)T^{*}(t)) > m+n, $
which follows that 
\[
1+\frac{\varrho}{\rho}  > \left( \frac{t + (m+n)(T^{*}(t)+1)}{t + (m+n)T^{*}(t)} \right)^{\alpha}.
\]
Thus, we have
$ \rho/(t+(m+n)(T^{*}(t)+1))^{\alpha}  >  \rho/(t+(m+n)T^{*}(t))^{\alpha} - \varrho/(t+(m+n)(T^{*}(t)+1))^{\alpha}$, which implies 
\[
\left\vert \omega_{t+(m+n)(T^{*}(t) +1)} \right\vert \leq \frac{\rho}{(t+(m+n)(T^{*}(t)+1))^{\alpha}}.
\]
Repeating the above analysis, we have
\[
\left\vert \omega_{t+(m+n)l} \right\vert \leq \frac{\rho}{(t+(m+n)l)^{\alpha}}, \quad l \geq T^{*}(t).
\]
Since $T^{*}(t)  \leq c_{T}t$ and $t \leq  k/c_{H} = k/(c_{T}(m+n)+1)$, we have $k \geq t + c_{T}(m+n)t \geq t + (m+n)T^{*}(t)$.
Since $k$ can be written as $k = \lfloor k/c_H \rfloor + \mod(k - \lfloor k/c_H \rfloor, m+n) - (m+n) + q(m+n) = t + q(m+n)$, where $q$ is an integer, one can get $\mod(k,m+n) = \mod(t,m+n)$.
Thus on $\mathrm{G}_{t}^{\mathrm{c}}$, we have (\ref{incur}) holds.

$\mathbf{Part}$ $\mathbf{4:}$ Prove the conclusion of Theorem \ref{theorem1}.

From (\ref{AAA}) in Part 1 and (\ref{CCC}) in Part 2, one can get
\[
\mathbb{P}\left( \mathrm{G}_{t}\right) = O\left(\frac{1}{t^{p}}\right).
\]
From (\ref{incur}) in Part 3 and $t \geq \lfloor k/c_H \rfloor - (m+n)$, we have
\begin{align*}
\mathbb{P}\left(\left\vert \omega_{k} \right\vert > \frac{\rho}{k^{\alpha}} \right) & \leq \mathbb{P}\left( \mathrm{G}_{t}\right) = O\left(\frac{1}{t^{p}}\right) \\
& \leq O\left(\frac{1}{ \lfloor k/c_{H}\rfloor^{p}}\right) = O\left(\frac{1}{k^{p}}\right),
\end{align*}
which indicates that \eqref{main} holds. \hfill $\qed$
\end{pf}

\begin{rem}
Theorem \ref{theorem1} derives the tail probability of the difference between $\hat{\theta}_k$ and $\hat{\theta}_k^{\text{RLS}}$, which converges to zero at an arbitrary polynomial rate.
This result provides two benefits:

\romannumeral1) It provides a framework for analyzing the convergence properties of $\hat{\theta}_k$ based on $\hat{\theta}_k^{\text{RLS}}$ tailored to the non-independent $s_{k}$ under quasi-stationary $u_{k}$.  
For instance, Theorem \ref{theorem1} together with Borel-Cantelli lemma \citep{ash2014real} implies the almost sure convergence of $\omega_k$. 
Then, the almost sure convergence of $\hat{\theta}_k$ can be further achieved by the almost sure convergence of $\hat{\theta}_k^{\text{RLS}}$. 

\romannumeral2) According to Lemma \ref{lemma_aaa444}, Theorem \ref{theorem1} also implies that the tail probability of $\hat{\theta}_k$ converges to zero at an arbitrary polynomial rate.
Due to the small tail probability, the case of the unbounded remote estimate $\hat{\theta}_k$ is negligible. 
Therefore, Theorem \ref{theorem1} can replace the role of the uniform boundedness guaranteed by the prior assumption in \citet{zhang2019asymptotically, wang2023identification, wang2024asymptotically} that the system parameter resides within a known compact set for their convergence analysis. 
\end{rem}

\subsection{Almost sure and $L^{p}$ convergence of the RLS-SA algorithm}\label{sec_c2}

This subsection establishes both almost sure convergence and \( L^{p} \) convergence for any positive integer \( p \), demonstrating that the RLS-SA algorithm is capable of effectively addressing the system identification task.

\begin{thm}\label{theorem2}
Under Assumptions \ref{ass_a}-\ref{ass_c}, $\hat{\theta}_{k}$ given by the RLS-SA algorithm converges to $\theta$ in the almost sure sense with a convergence rate of $O(\sqrt{\log k/k})$, i.e., 
\begin{align}
\label{theorem_1} \left\Vert \tilde{\theta}_{k} \right\Vert = O\left(\sqrt{\frac{\log k}{k}}\right) \quad  \mathrm{a.s.}
\end{align}
where $\tilde{\theta}_{k} \triangleq \hat{\theta}_{k} - \theta$ is the estimation error.
\end{thm}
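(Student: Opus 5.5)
We will decompose the error as $\tilde{\theta}_{k} = (\hat{\theta}_{k} - \hat{\theta}_{k}^{\text{RLS}}) + \tilde{\theta}_{k}^{\text{RLS}}$. The second term is handled by the classical almost sure convergence rate of RLS for ARX systems under Assumptions \ref{ass_a}--\ref{ass_c}. Since $\lambda_{\min}(P_k^{-1})$ grows linearly by persistent excitation and $\lambda_{\max}(P_k^{-1}) = O(k)$ a.s., the standard estimate $\Vert\tilde{\theta}_{k}^{\text{RLS}}\Vert^2 = O(\log \lambda_{\max}(P_k^{-1})/\lambda_{\min}(P_k^{-1}))$ gives $\Vert \tilde{\theta}_{k}^{\text{RLS}}\Vert = O(\sqrt{\log k/k})$ a.s. This may be quoted from \citet{Liu2025} or the appendix lemmas controlling $P_k$ and $\phi_k$. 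It therefore suffices to show that $\Vert \hat{\theta}_{k} - \hat{\theta}_{k}^{\text{RLS}}\Vert = O(k^{-\alpha})$ a.s., which is $o(\sqrt{\log k/k})$ because $\alpha>1/2$.

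To control the tracking term, apply Theorem \ref{theorem1} with $p=2$. This gives $\sum_k \mathbb{P}(\vert \omega_k\vert > \rho/k^{\alpha}) < \infty$, so by the Borel--Cantelli lemma, almost surely $\vert\omega_k\vert \le \rho/k^{\alpha}$ for all sufficiently large $k$. Each coordinate $i$ equals $x_k$ for a time $k$ in every window of length $m+n$. For a given $k$, pick the latest $k'\in\{k-(m+n)+1,\ldots,k\}$ with $x_{k'}=\epsilon_i$. Between $k'$ and $k$ the $i$-th component of $\hat{\theta}$ is unchanged, so
\begin{align*}
\big\vert \epsilon_i^T(\hat{\theta}_k - \hat{\theta}_k^{\text{RLS}})\big\vert \le \vert \omega_{k'}\vert + \big\Vert \hat{\theta}^{\text{RLS}}_{k'+m+n} - \hat{\theta}^{\text{RLS}}_{k}\big\Vert .
\end{align*}

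The RLS increment over at most $m+n$ steps is bounded by the sum of $\Vert P_l\Vert\,\Vert\phi_{l+1}\Vert\,\vert\phi_{l+1}^T\tilde{\theta}_l^{\text{RLS}}+d_{l+1}\vert$, as in \eqref{eq1}. We will show this sum is $O(k^{-\alpha})$ a.s. For the first factors, use the sets $\mathrm{C}_l$ and $\mathrm{D}_l$ from the proof of Theorem \ref{theorem1}, whose probabilities are summable, together with Borel--Cantelli. Eventually $\Vert P_l\Vert = O(1/l)$ and $\Vert \phi_{l+1}\Vert \le l^{(1-\alpha)/2}$. For the last factor, the Gaussian $d_{l+1}$ satisfies $\vert d_{l+1}\vert \le l^{(1-\alpha)/2}$ eventually, since the sets $\mathrm{F}_l$ also have summable probability, and $\tilde{\theta}^{\text{RLS}}_l\to0$ a.s. Each increment is then $O(l^{-1}\cdot l^{1-\alpha}) = O(l^{-\alpha})$, and summing the bounded number of terms keeps this order. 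Summing the coordinate bounds gives $\Vert\hat{\theta}_k - \hat{\theta}_k^{\text{RLS}}\Vert = O(k^{-\alpha})$ a.s. Together with the RLS rate this yields \eqref{theorem_1}.

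The main obstacle is the RLS rate itself, specifically the almost sure $\sqrt{\log k/k}$ bound without a compact parameter set. We will rely on the standard martingale estimate $\Vert\tilde{\theta}_k^{\text{RLS}}\Vert^2 \le c\,\log\lambda_{\max}(P_k^{-1})/\lambda_{\min}(P_k^{-1})$ (Lai--Wei type), combined with the almost sure growth of $\sum\phi_l\phi_l^T$ implied by Assumption \ref{ass_a} and the lemmas in Appendix \ref{aap_a}.
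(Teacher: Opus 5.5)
Your proposal is correct and follows essentially the same route as the paper. Both arguments use four ingredients: Borel--Cantelli applied to Theorem~\ref{theorem1}; the coordinate-freezing observation that each component of $\hat{\theta}_k$ keeps the value it had at the last time it was selected within the preceding window of length $m+n$; an $O(k^{-\alpha})$ bound on at most $m+n$ consecutive RLS increments; and the almost sure RLS rate of Lemma~\ref{lemma_c}. The only differences are cosmetic: the paper compares $\hat{\theta}_k$ with $\hat{\theta}^{\text{RLS}}_{k+m+n}$ rather than $\hat{\theta}^{\text{RLS}}_k$, and it cites Lemma~\ref{lemma_a111} (with $\gamma=\alpha$) for the increment bound, which you re-derive inline from the sets $\mathrm{C}_l$, $\mathrm{D}_l$, $\mathrm{F}_l$ by the same argument.
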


\begin{pf}
By (\ref{main}), we have $\sum_{k=1}^{\infty} \mathbb{P}(\vert \omega_{k} \vert > \rho/k^{\alpha}) < \infty$. 
Then, by Borel-Cantelli lemma \citep{ash2014real}, it holds that
\[
\left\vert \omega_{k} \right\vert = O\left(\frac{1}{k^{\alpha}}\right) \quad  \mathrm{a.s.}
\]
For $i = 0,1,2,\ldots,m+n-1$, one can get $x_{k-i}^{T}\hat{\theta}_{k} = x_{k-i}^{T}\hat{\theta}_{k-i}$.
It holds that
\begin{align*}
& \left\vert x_{k-i}^{T}\hat{\theta}_{k} - x_{k-i}^{T}\hat{\theta}_{k+m+n-i}^{\text{RLS}}\right\vert \nonumber\\
= & \left\vert \omega_{k-i}  \right\vert  = O\left(\frac{1}{(k-i)^{\alpha}}\right) =  O\left(\frac{1}{k^{\alpha}}\right) \quad  \mathrm{a.s.}
\end{align*}
Besides, by Lemma \ref{lemma_a111} in Appendix \ref{aap_b}, we have
\begin{align*}
\left\Vert \hat{\theta}_{k+m+n}^{\text{RLS}} - \hat{\theta}_{k+m+n-i}^{\text{RLS}} \right\Vert & \leq \sum_{j=1}^{i}\left\Vert \hat{\theta}_{k+m+n-j+1}^{\text{RLS}} - \hat{\theta}_{k+m+n-j}^{\text{RLS}} \right\Vert \\
& = O\left(\frac{i}{k^{\alpha}}\right) = O\left(\frac{1}{k^{\alpha}}\right)  \quad \mathrm{a.s.} 
\end{align*}
Hence, 
\begin{align*}
\left\vert x_{k-i}^{T}\hat{\theta}_{k} - x_{k-i}^{T}\hat{\theta}_{k+m+n}^{\text{RLS}}\right\vert
= O\left(\frac{1}{k^{\alpha}}\right) \quad \mathrm{a.s.} 
\end{align*}
Note that $x_{k-i}^{T}\tilde{\theta}_{k}$ denotes the value of the $\mathrm{mod}(k-i,m+n)+1$-th dimension of $\tilde{\theta}_{k}$.
Thus,
\begin{align}\label{thetaas}
\left\Vert \hat{\theta}_{k} - \hat{\theta}_{k+m+n}^{\text{RLS}}\right\Vert =  O\left(\frac{1}{k^{\alpha}}\right) \quad \mathrm{a.s.} 
\end{align}
Then, by Lemma \ref{lemma_c} in Appendix \ref{aap_b} and $\Vert \tilde{\theta}_{k} \Vert  \leq \Vert \hat{\theta}_{k} - \hat{\theta}_{k+m+n}^{\text{RLS}} \Vert + \Vert \tilde{\theta}_{k+m+n}^{\text{RLS}}\Vert $, one can get (\ref{theorem_1}) holds. {\hfill $\qed$}
\end{pf}

\begin{thm}\label{theorem2b}
Under Assumptions \ref{ass_a}-\ref{ass_c}, $\hat{\theta}_{k}$ given by the RLS-SA algorithm converges to $\theta$ in the $L^{p}$ sense with a convergence rate of $O(1/k^{p/2})$, i.e.,
\begin{align}
\label{theorem_2a} \mathbb{E}\left[ \left\Vert \tilde{\theta}_{k} \right\Vert^{p} \right] = O\left(\frac{1}{k^{p/2}}\right),
\end{align}
where $p$ is an arbitrary positive integer.
\end{thm}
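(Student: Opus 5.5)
We need to prove $\mathbb{E}[\Vert\tilde{\theta}_k\Vert^p]=O(k^{-p/2})$. We use the decomposition already employed for Theorem \ref{theorem2}:
\[
\tilde{\theta}_{k}=\left(\hat{\theta}_{k}-\hat{\theta}^{\text{RLS}}_{k+m+n}\right)+\tilde{\theta}^{\text{RLS}}_{k+m+n}.
\]
By the $c_r$-inequality, $\mathbb{E}[\Vert\tilde{\theta}_k\Vert^p]\le 2^{p-1}(\mathbb{E}[\Vert\hat{\theta}_{k}-\hat{\theta}^{\text{RLS}}_{k+m+n}\Vert^p]+\mathbb{E}[\Vert\tilde{\theta}^{\text{RLS}}_{k+m+n}\Vert^p])$. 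The second term is $O(k^{-p/2})$ by the RLS moment bound Lemma \ref{lemma_aaa444} in Appendix \ref{aap_b}, the same lemma used in Part 1 of the proof of Theorem \ref{theorem1}, applied with exponent $p$. Hence it suffices to show that the tracking term has $p$-th moment $O(k^{-p/2})$. Since $\alpha>1/2$, it is enough to get $O(k^{-\alpha p})$ up to a negligible remainder.

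\textbf{Coordinatewise reduction.} As in the proof of Theorem \ref{theorem2}, for $i=0,\ldots,m+n-1$ we have $x_{k-i}^T\hat{\theta}_k=x_{k-i}^T\hat{\theta}_{k-i}$. Hence the $(\mathrm{mod}(k-i,m+n)+1)$-th coordinate of $\hat{\theta}_{k}-\hat{\theta}^{\text{RLS}}_{k+m+n}$ equals
\[
\omega_{k-i}+x_{k-i}^T\left(\hat{\theta}^{\text{RLS}}_{k+m+n-i}-\hat{\theta}^{\text{RLS}}_{k+m+n}\right).
\]
Again by the $c_r$-inequality it suffices to bound $\mathbb{E}|\omega_{k-i}|^p$ and $\mathbb{E}\Vert\hat{\theta}^{\text{RLS}}_{j+1}-\hat{\theta}^{\text{RLS}}_{j}\Vert^p$ for $j$ of order $k$. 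Each RLS increment is bounded by $\Vert\tilde\theta^{\text{RLS}}_{j+1}\Vert+\Vert\tilde\theta^{\text{RLS}}_{j}\Vert$, so Lemma \ref{lemma_aaa444} gives $O(k^{-p/2})$ directly. This sidesteps the need for a moment version of Lemma \ref{lemma_a111}.

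\textbf{Moments of $\omega_k$ (main step).} Split on the event $\mathrm{H}_k=\{|\omega_k|>\rho/k^{\alpha}\}$:
\[
\mathbb{E}|\omega_k|^p\le \frac{\rho^p}{k^{\alpha p}}+\mathbb{E}\left[|\omega_k|^pI_{\mathrm{H}_k}\right]\le \frac{\rho^p}{k^{\alpha p}}+\left(\mathbb{E}|\omega_k|^{2p}\right)^{1/2}\mathbb{P}(\mathrm{H}_k)^{1/2}.
\]
To bound $\mathbb{E}|\omega_k|^{2p}$ crudely, use (\ref{uuuiii}), which gives $|x_k^T\hat\theta_k|\le c_xk^{1-\alpha}$ deterministically. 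Together with Lemma \ref{lemma_aaa444} this yields $\mathbb{E}|\omega_k|^{2p}\le 2^{2p-1}(c_x^{2p}k^{2p(1-\alpha)}+\mathbb{E}\Vert\hat\theta^{\text{RLS}}_{k+m+n}\Vert^{2p})=O(k^{2p})$. Theorem \ref{theorem1} holds for an arbitrary positive integer exponent, so we apply it with exponent $4p$ to get $\mathbb{P}(\mathrm{H}_k)=O(k^{-4p})$. The cross term is then $O(k^{p}\cdot k^{-2p})=O(k^{-p})$. Altogether $\mathbb{E}|\omega_k|^p=O(k^{-\alpha p})=o(k^{-p/2})$.

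The only delicate point is this step. Theorem \ref{theorem1} gives only a tail probability, and $\omega_k$ is not uniformly bounded because no compact parameter set is assumed. The deterministic polynomial growth bound on $\hat\theta_k$ from Part 2 of the proof of Theorem \ref{theorem1} is exactly what lets the arbitrary polynomial tail rate absorb the large values.

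The remaining bookkeeping is routine: shifting the indices $k-i$ versus $k$ only changes constants, since $i\le m+n$. For small $k$ all moments are finite, so the $O(\cdot)$ statement is unaffected. Combining the coordinate bounds yields (\ref{theorem_2a}).
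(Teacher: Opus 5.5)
Your proposal is correct and follows essentially the same route as the paper. It splits $\mathbb{E}|\omega_k|^p$ on the event $\{|\omega_k|>\rho/k^{\alpha}\}$ and controls the tail part with the deterministic growth bound (\ref{uuuiii}), the RLS moments of Lemma \ref{lemma_aaa444}, and the arbitrary polynomial rate of Theorem \ref{theorem1}, giving $\mathbb{E}|\omega_k|^p=O(k^{-\alpha p})$; it then passes to coordinates via $x_{k-i}^T\hat{\theta}_k=x_{k-i}^T\hat{\theta}_{k-i}$. The differences are cosmetic: the paper splits $\omega_k$ into three pieces inside the tail event instead of applying Cauchy--Schwarz to $\omega_k$ as a whole, and it writes directly $x_{k-i}^T\tilde{\theta}_k=\omega_{k-i}+x_{k-i}^T\tilde{\theta}^{\text{RLS}}_{k+m+n-i}$, which is what your detour through $\hat{\theta}^{\text{RLS}}_{k+m+n}$ and the RLS increments reduces to.
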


\begin{pf}
Since $I_{\{ \vert \omega_{k} \vert > \rho/k^{\alpha} \}}^{2} = I_{\{ \vert \omega_{k} \vert > \rho/k^{\alpha} \}}$ and $\omega_{k} = x_{k}^{T}\hat{\theta}_{k} - x_{k}^{T}\tilde{\theta}^{\text{RLS}}_{k+m+n} - x_{k}^{T}\theta,$ by the $C_{r}$-inequality \citep{Loeve1977}, we have $\mathbb{E}[\vert\omega_{k}\vert^{p}] = 
\mathbb{E}[\vert\omega_{k}\vert^{p} I_{\{ \vert \omega_{k} \vert > \rho/k^{\alpha} \}}]
+ \mathbb{E}[\vert\omega_{k}\vert^{p} I_{\{ \vert \omega_{k} \vert \leq \rho/k^{\alpha} \}}] 
\leq 2^{p-1}\mathbb{E}[\vert x_{k}^{T}\hat{\theta}_{k} \vert^{p} I_{\{ \vert \omega_{k} \vert > \rho/k^{\alpha} \}}] 
+ 2^{p-1}\mathbb{E}[\Vert \tilde{\theta}^{\text{RLS}}_{k+m+n} \Vert^{p} I_{\{ \vert \omega_{k} \vert > \rho/k^{\alpha} \}}]  
+ 2^{p-1}\mathbb{E}[\Vert \theta \Vert^{p} I_{\{ \vert \omega_{k} \vert > \rho/k^{\alpha} \}}] 
+ \mathbb{E}[\vert\omega_{k}\vert^{p} I_{\{ \vert \omega_{k} \vert \leq \rho/k^{\alpha} \}}].
$

By (\ref{main}) and (\ref{uuuiii}), one can get $\mathbb{E}[\vert x_{k}^{T}\hat{\theta}_{k} \vert^{p} I_{\{ \vert \omega_{k} \vert > \rho/k^{\alpha} \}}] =  O(1/k^{p\alpha}).$
Besides, by Lemma \ref{lemma_aaa444} in Appendix \ref{aap_b}, Theorem \ref{theorem1}, and H\"{o}lder's Inequality \citep{Loeve1977}, it holds that $ \mathbb{E}[\Vert \tilde{\theta}^{\text{RLS}}_{k+m+n} \Vert^{p} I_{\{ \vert \omega_{k} \vert > \rho/k^{\alpha} \}}] \\\leq (\mathbb{E}[\Vert \tilde{\theta}^{\text{RLS}}_{k+m+n} \Vert^{2p}] \mathbb{E}[I_{\{ \vert \omega_{k} \vert > \rho/k^{\alpha} \}}^{2}])^{1/2} = O(1/k^{p}).$
In addition, by Theorem \ref{theorem1}, we have $\mathbb{E}[\Vert \theta \Vert^{2} I_{\{ \vert \omega_{k} \vert > \rho/k^{\alpha} \}}] 
= O(1/k^{p}).$
Therefore, together with $\mathbb{E}\left[\vert\omega_{k}\vert^{p}I_{\{ \vert \omega_{k} \vert \leq \rho/k^{\alpha} \}}\right] = O(1/k^{p\alpha})$, one can get $\mathbb{E}\left[\vert\omega_{k}\vert^{p}\right] = O(1/k^{p\alpha}).$ 

Thus, for $i = 0,1,2,\ldots,m+n-1$, we obtain $ \mathbb{E}[\vert x_{k-i}^{T}\hat{ \theta}_{k} - x_{k-i}^{T}\hat{\theta}_{k+m+n-i}^{\text{RLS}}\vert^{p} ]  = \mathbb{E} [ \vert \omega_{k-i} \vert^{p}] = O(1/(k-i)^{p\alpha}) = O(1/k^{p\alpha}).$
Then, by Lemma \ref{lemma_aaa444} in Appendix \ref{aap_b} and the $C_{r}$-inequality \citep{Loeve1977}, for $i = 0,1,2,\ldots,m+n-1$, it holds that $\mathbb{E}[\vert x_{k-i}^{T}\tilde{\theta}_{k}\vert^{p}] \leq  2^{p-1}\mathbb{E}[\vert x_{k-i}^{T}\hat{\theta}_{k-i} - x_{k-i}^{T}\hat{\theta}_{k+m+n-i}^{\text{RLS}}\vert^{p}] + 2^{p-1}\mathbb{E}[\Vert \tilde{\theta}_{k+m+n-i}^{\text{RLS}} \Vert^{p}] =  O(1/k^{p/2}),$
which implies that (\ref{theorem_2a}) holds. {\hfill $\qed$}

\end{pf}

\begin{rem}
Studying convergence in \( L^{p} \) for different values of \( p \) provides a refined approach for estimating errors.  
For instance, $L^{1}$ convergence ensures that the remote estimate is asymptotically unbiased; $L^{2}$ convergence corresponds to mean square convergence; $L^{4}$ is particularly useful for enhancing robustness against outliers \citep{Zhai2020}.
\end{rem} 

\begin{rem}
The $L^{p}$ convergence rate of the RLS estimator established in \citet{Liu2025} provides a theoretical foundation for Theorem~\ref{theorem2b}. 
By showing that the $L^p$ convergence rate of $\omega_k$ is faster than that of $\hat{\theta}_{k}^{\mathrm{RLS}}$, Theorem~\ref{theorem2b} establishes the $L^p$ convergence rate of $\hat{\theta}_{k}$.
\end{rem} 

\subsection{The asymptotic efficiency of the RLS-SA algorithm}\label{sec_c3}

Finally, the asymptotic efficiency of the RLS-SA algorithm is presented as the following theorem.

\begin{thm}\label{theorem2a}
Under Assumptions \ref{ass_a}-\ref{ass_c}, $\hat{\theta}_{k}$ given by the RLS-SA algorithm is an asymptotically efficient estimate of $\theta$ in both distribution and covariance senses:

\romannumeral1) Asymptotic efficiency in the distribution sense \citep{Fabian1978}:
\begin{align}
\label{theorem_2} \sqrt{k}\tilde{\theta}_{k} \xrightarrow{d}  \mathcal{N}\left(0, \bar{\Sigma}_{\mathrm{CR}} \right),
\end{align}
where 
$\bar{\Sigma}_{\mathrm{CR}} = \lim\limits_{k \to \infty}k\Sigma_{\mathrm{CR}}(k) = \delta_{d}^{2}(\bar{\mathbb{E}}[ \{ \phi_{k}\phi_{k}^{T}\}_{k=1}^{\infty}])^{-1},
$ and 
$\Sigma_{\mathrm{CR}}(k) = \delta_{d}^{2}(\sum_{l=1}^{k}\mathbb{E}[\phi_{l}\phi_{l}^{T}])^{-1}$ is the original CRLB.

\romannumeral2) Asymptotic efficiency in the covariance sense \citep{Rao1961}:
\begin{align}
\label{theorem_3}\lim\limits_{k \rightarrow \infty} k\mathbb{E}\left[\tilde{\theta}_{k}\tilde{\theta}_{k}^{T} \right] = \bar{\Sigma}_{\mathrm{CR}}.
\end{align}
\end{thm}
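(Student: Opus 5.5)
The plan is to transfer the known asymptotic efficiency of the local RLS estimate to the remote estimate. The decomposition is
\[
\sqrt{k}\tilde{\theta}_{k} = \sqrt{k}\bigl(\hat{\theta}_{k} - \hat{\theta}^{\text{RLS}}_{k+m+n}\bigr) + \sqrt{k}\,\tilde{\theta}^{\text{RLS}}_{k+m+n},
\]
and the task is to show that the first term is negligible in both the distributional and the second-moment sense.

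First I identify the limiting covariance. By Assumptions \ref{ass_a}--\ref{ass_c}, the noise $d_k$ is Gaussian and independent of the regressors' innovation structure. The log-likelihood of $\{y_l\}$ given $\theta$ therefore gives Fisher information $\delta_d^{-2}\sum_{l=1}^k\mathbb{E}[\phi_l\phi_l^T]$, so $\Sigma_{\mathrm{CR}}(k)=\delta_d^2(\sum_{l\le k}\mathbb{E}[\phi_l\phi_l^T])^{-1}$. Quasi-stationarity and the persistent excitation condition iii) then yield $k\Sigma_{\mathrm{CR}}(k)\to\delta_d^2(\bar{\mathbb{E}}[\phi_k\phi_k^T])^{-1}=\bar\Sigma_{\mathrm{CR}}$. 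Next I invoke the asymptotic efficiency of RLS from \citet{Liu2025}, as collected in the appendix lemmas used above (Lemma \ref{lemma_aaa444} and its companions). This gives $\sqrt{k}\tilde\theta^{\text{RLS}}_k\xrightarrow{d}\mathcal N(0,\bar\Sigma_{\mathrm{CR}})$ and $k\mathbb{E}[\tilde\theta^{\text{RLS}}_k\tilde\theta^{\text{RLS}T}_k]\to\bar\Sigma_{\mathrm{CR}}$. Since $(k+m+n)/k\to1$, both statements also hold with index $k+m+n$ after rescaling by $\sqrt{k}$.

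For part i), I use (\ref{thetaas}) from the proof of Theorem \ref{theorem2}, namely $\|\hat\theta_k-\hat\theta^{\text{RLS}}_{k+m+n}\|=O(k^{-\alpha})$ a.s. Because $\alpha>1/2$, this gives $\sqrt{k}\|\hat\theta_k-\hat\theta^{\text{RLS}}_{k+m+n}\|=O(k^{1/2-\alpha})\to0$ a.s., hence in probability. Slutsky's theorem then yields (\ref{theorem_2}).

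For part ii), I need $k\mathbb{E}\|\hat\theta_k-\hat\theta^{\text{RLS}}_{k+m+n}\|^2\to0$. Componentwise, the $\mathrm{mod}(k-i,m+n)+1$ coordinate of the difference equals $\omega_{k-i}+x_{k-i}^T(\hat\theta^{\text{RLS}}_{k+m+n-i}-\hat\theta^{\text{RLS}}_{k+m+n})$ for $i=0,\dots,m+n-1$. The proof of Theorem \ref{theorem2b} shows $\mathbb{E}|\omega_k|^2=O(k^{-2\alpha})$. For the RLS increments, I bound $\|\hat\theta^{\text{RLS}}_{l+1}-\hat\theta^{\text{RLS}}_l\|\le\|P_l\|\|\phi_{l+1}\|(\|\phi_{l+1}\|\|\tilde\theta^{\text{RLS}}_l\|+|d_{l+1}|)$. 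I split this on the events $\mathrm C_l,\mathrm D_l$ from Part 1 of Theorem \ref{theorem1}: off these events $\|P_l\|=O(1/l)$, and on them I use H\"older's inequality with their $O(l^{-2p})$ probabilities together with polynomial moment bounds on $P_l$, $\phi_l$ and $\tilde\theta^{\text{RLS}}_l$. This should give $\mathbb{E}\|\hat\theta^{\text{RLS}}_{l+1}-\hat\theta^{\text{RLS}}_l\|^2=O(l^{-2\alpha})$, and in fact $O(\log^c l/l^{2})$ up to the tail terms. Consequently $k\mathbb{E}\|\hat\theta_k-\hat\theta^{\text{RLS}}_{k+m+n}\|^2=O(k^{1-2\alpha})\to0$. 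Expanding $k\mathbb{E}[\tilde\theta_k\tilde\theta_k^T]$ into the RLS term, the difference term, and cross terms, the cross terms are bounded by Cauchy--Schwarz as $O(\sqrt{k\cdot k^{-1}}\cdot\sqrt{k^{1-2\alpha}})\to0$, which gives (\ref{theorem_3}).

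The main obstacle is the second-moment control of the RLS increments, in particular bounding $\mathbb{E}\|P_l\|^q$ on the bad event $\mathrm C_l$. Since $P_l\le P_0$ deterministically (RLS only decreases $P$), $\|P_l\|\le\|P_0\|$ uniformly, and I would try exactly this first. This makes the H\"older step straightforward, needing only the bounded moments of $\phi_l$ from Assumption \ref{ass_a}. A secondary check is that the covariance statement for RLS in \citet{Liu2025} indeed holds without the compact-parameter assumption; this is what the appendix lemmas are presumed to provide.
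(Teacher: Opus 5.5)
Your proof is correct. Part i) is exactly the paper's argument: (\ref{thetaas}) with $\alpha>1/2$ gives $\sqrt{k}(\hat\theta_k-\hat\theta^{\text{RLS}}_{k+m+n})\to0$ a.s., and the RLS asymptotic normality in Lemma \ref{lemma_aaa444} finishes via Slutsky.

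For part ii) you take a different route. The paper does not compare second moments with RLS at all. It notes that Theorem \ref{theorem2b} with $p=4$ gives $\sup_k\mathbb{E}[\Vert\sqrt{k}\tilde\theta_k\Vert^4]<\infty$. Lemma \ref{lemma_aaa222} then upgrades the convergence in distribution from part i) directly to $k\mathbb{E}[\tilde\theta_k\tilde\theta_k^T]\to\bar\Sigma_{\mathrm{CR}}$. No control of RLS increments is needed there, because the proof of Theorem \ref{theorem2b} compares each coordinate of $\hat\theta_k$ with the RLS estimate at that coordinate's own index $k+m+n-i$.

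You instead prove the mean-square equivalence $k\mathbb{E}\Vert\hat\theta_k-\hat\theta^{\text{RLS}}_{k+m+n}\Vert^2=O(k^{1-2\alpha})\to0$ and then transfer the RLS covariance limit. Two remarks on this route:
\begin{itemize}
\item Lemma \ref{lemma_aaa444} does not literally state $k\mathbb{E}[\tilde\theta^{\text{RLS}}_k(\tilde\theta^{\text{RLS}}_k)^T]\to\bar\Sigma_{\mathrm{CR}}$. You would get it from (\ref{lemma_d_1}) and (\ref{lemma_e_1}) via Lemma \ref{lemma_aaa222}. That is the same uniform-integrability step the paper applies directly to $\hat\theta_k$, so your argument uses it once and then adds extra work on top.
\item The extra work does go through as you expect. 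Since $P_l^{-1}=P_0^{-1}+\sum_{j\le l}\phi_j\phi_j^T$, you have $\Vert P_l\Vert\le\Vert P_0\Vert$. Splitting only on $\mathrm C_l$ and using H\"older, Lemma \ref{lemma_b12}, Lemma \ref{lemma_b22}, (\ref{lemma_e_1}) and the Gaussian moments of $d_{l+1}$ gives $\mathbb{E}\Vert\hat\theta^{\text{RLS}}_{l+1}-\hat\theta^{\text{RLS}}_l\Vert^2=O(l^{-2})$. The event $\mathrm D_l$ is not needed.
\end{itemize}

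Your route buys a stronger by-product: $\sqrt{k}(\hat\theta_k-\hat\theta^{\text{RLS}}_{k+m+n})\to0$ in $L^2$, not only almost surely, so the remote estimate is mean-square equivalent to the local RLS estimate at the $\sqrt{k}$ scale. The paper's route buys brevity, reusing Theorem \ref{theorem2b} instead of needing an increment lemma.
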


\begin{pf}
By (\ref{thetaas}), one can get
\begin{align*}
\lim\limits_{k \rightarrow \infty} \left\Vert \sqrt{k}\hat{\theta}_{k} - \sqrt{k}\hat{\theta}_{k+m+n}^{\text{RLS}}\right\Vert = 0 \quad  \mathrm{a.s.}
\end{align*}
Then, by Lemma \ref{lemma_aaa444} in Appendix \ref{aap_b}, we have (\ref{theorem_2}) holds. 
Note that $\mathbb{E}[\Vert \sqrt{k}\tilde{\theta}_{k} \Vert^{4}]  < \infty.$
Then, by  Lemma \ref{lemma_aaa222} in Appendix \ref{aap_a}, it holds that
\begin{align*}
\lim\limits_{k\rightarrow\infty} k \mathbb{E}\left[\tilde{\theta}_{k}\tilde{\theta}_{k}^{T} \right] = \delta_{d}^{2}\left(\bar{\mathbb{E}}\left[ \{ \phi_{k}\phi_{k}^{T}\}_{k=1}^{\infty}\right] \right)^{-1},
\end{align*}
which implies that (\ref{theorem_3}) holds. {\hfill $\qed$}
\end{pf}

\begin{rem}
From an encoding perspective, the proposed RLS-SA algorithm can be interpreted as performing semantic information extraction \citep{Lu2022} to improve estimation accuracy.
Specifically, instead of directly quantizing raw measurement data, the encoder first extracts task-relevant parameter information through the RLS algorithm, which provides richer parameter information in one-bit data.
\end{rem}

\section{Numerical examples}\label{sec_e}
This section presents the theoretical results through simulation experiments. 
In addition, we compare the MSE achieved by the RLS-SA algorithm with that of three other types of algorithms:

\romannumeral1) The recursive identification algorithm for ARX systems under one-bit communications \citep{csaji2012recursive}.

\romannumeral2) The asymptotically efficient recursive identification algorithm using a fixed quantizer under one-bit communications \citep{yin2007asymptotically, wang2024asymptotically}.

\romannumeral3) The asymptotically efficient recursive identification algorithm using an adaptive quantizer under one-bit communications \citep{you2015recursive}.

\begin{rem}
To the best of our knowledge, the algorithms selected for comparison are among the best-performing methods currently available in the literature on one-bit system identification \citep{yin2007asymptotically, wang2024asymptotically, you2015recursive}. Therefore, we adopted them as benchmarks to evaluate the performance of the proposed method.
\end{rem}

Since the proposed RLS-SA algorithm is more general in the system model and can adapt to a wider range of system input types, we first designed the following simulation experiments to verify its performance advantages.

\subsection{Convergence properties of the RLS-SA algorithm}

We consider an ARX system as:
\begin{align*}
y_{k} & + a_{1}y_{k-1} + a_{2}y_{k-2} \\
& =  b_{1}u_{k-1} + b_{2}u_{k-2} + b_{3}u_{k-3} + d_{k},\quad k \geq 1,
\end{align*}
where the system input $u_{k}$ is generated from a bounded deterministic sequence and a stochastic sequence: $u_{k} = 3\sin (\pi k/4) + \mathcal{N}(0,4)$; the unknown parameter $\theta = [a_{1},a_{2},b_{1},b_{2},b_{3}]^{T}=[0.2,1,0.6,-0.2,-0.6]^{T}$; the observation noise $d_{k}$ follows $\mathcal{N}(0,0.25)$.

We apply the RLS-SA algorithm to estimate the parameter \(\theta\).
The initial values can be selected based on available prior information, such as the prior distribution of the unknown parameters.
In this simulation,  we assume that $\theta$ follows an unbiased prior distribution.
Accordingly, the initial estimates are chosen as
\(\hat{\theta}^{\mathrm{RLS}}_{0} = [0,0,0,0,0]^{T}\) and \(\hat{\theta}_{0} = [0,0,0,0,0]^{T}\), with the initial covariance matrix \(P_{0} = I_{5}/10\).
Besides, as established in Theorem~\ref{theorem1}, the absolute difference in each corresponding component between the remote estimate $\hat{\theta}_{k}$ and the local RLS estimate $\hat{\theta}_{k}^{\mathrm{RLS}}$ (i.e., $\omega_{k}$) decreases as the step-size coefficient $\alpha$ approaches~1.
Accordingly, in the simulations, the step-size coefficients are set to \(\alpha = 0.95\) and \(\beta_{k} \equiv 1\).

Fig. \ref{fig_a} shows that all estimates given by the RLS-SA algorithm successfully converge to the true value.

\begin{figure}[h]
\noindent\includegraphics[width=0.48\textwidth]{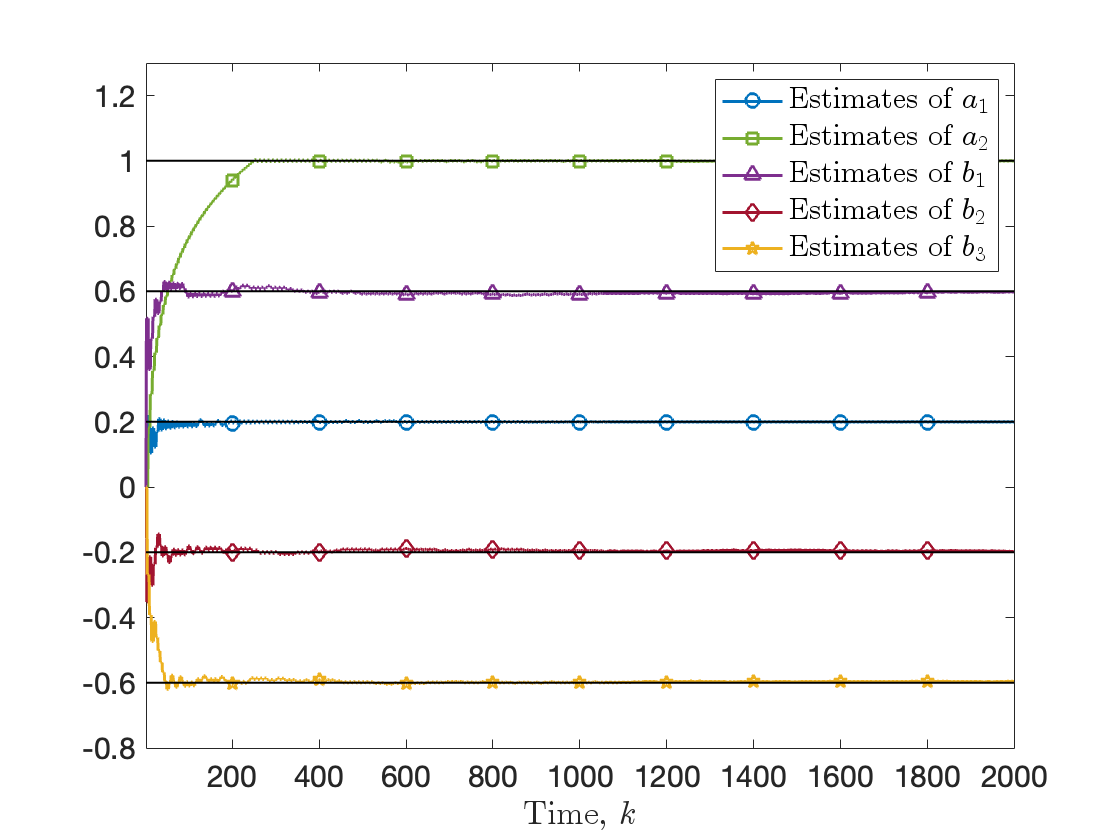}
\caption{Trajectories of estimates given by the RLS-SA algorithm}
\label{fig_a}
\end{figure}

Furthermore, Fig. \ref{fig_d} demonstrates that the trajectory of \( \sqrt{k/\log k} \Vert \tilde{\theta}_{k} \Vert \) remains bounded, confirming that the almost sure convergence rate of \( \hat{\theta}_{k} \) of the RLS-SA algorithm is \( O(\sqrt{\log k/k}) \). 
Due to the one-bit transmission constraint, the remote estimate initially struggles to track the parametric knowledge extracted by the RLS algorithm, resulting in a relatively large estimation error compared with the RLS algorithm. 
As time progresses,  the proposed algorithm enables the remote estimate to gradually converge to the RLS estimate, ultimately achieving the same convergence properties.

\begin{figure}[h]
\noindent\includegraphics[width=0.48\textwidth]{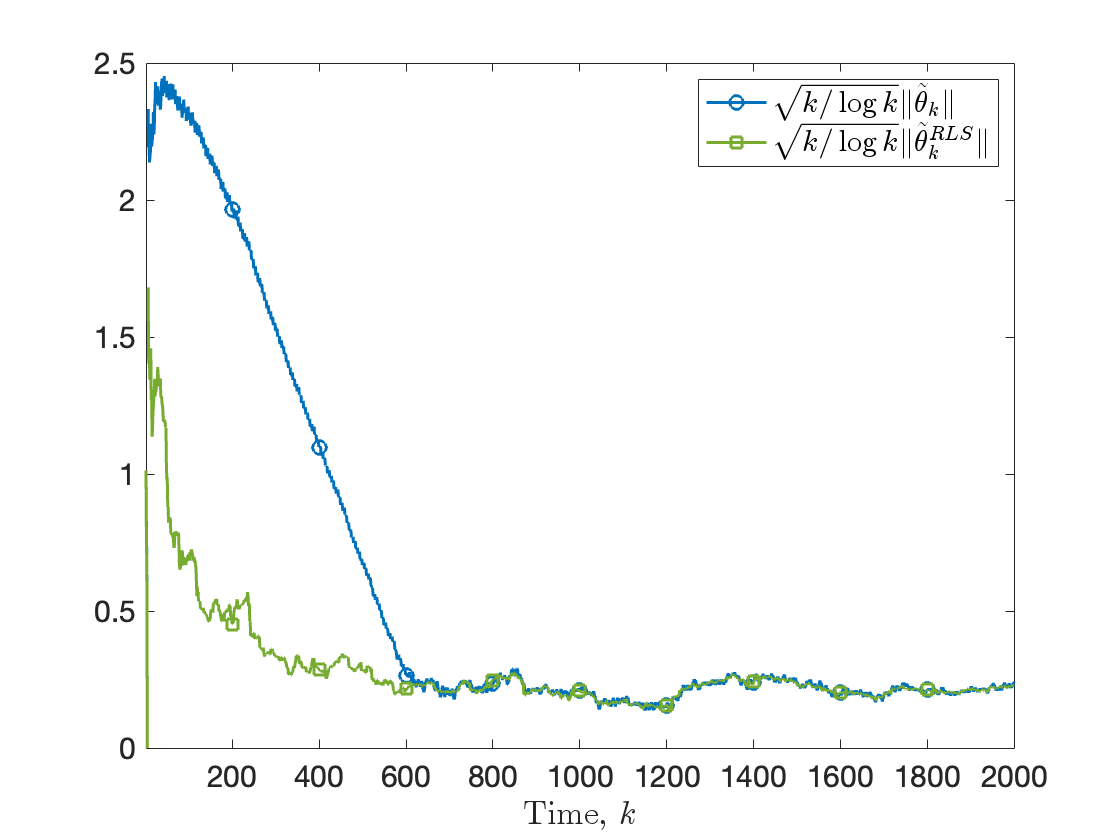}
\caption{Almost sure convergence of the RLS-SA algorithm}
\label{fig_d}
\end{figure}

\begin{figure}[h]
\noindent\includegraphics[width=0.48\textwidth]{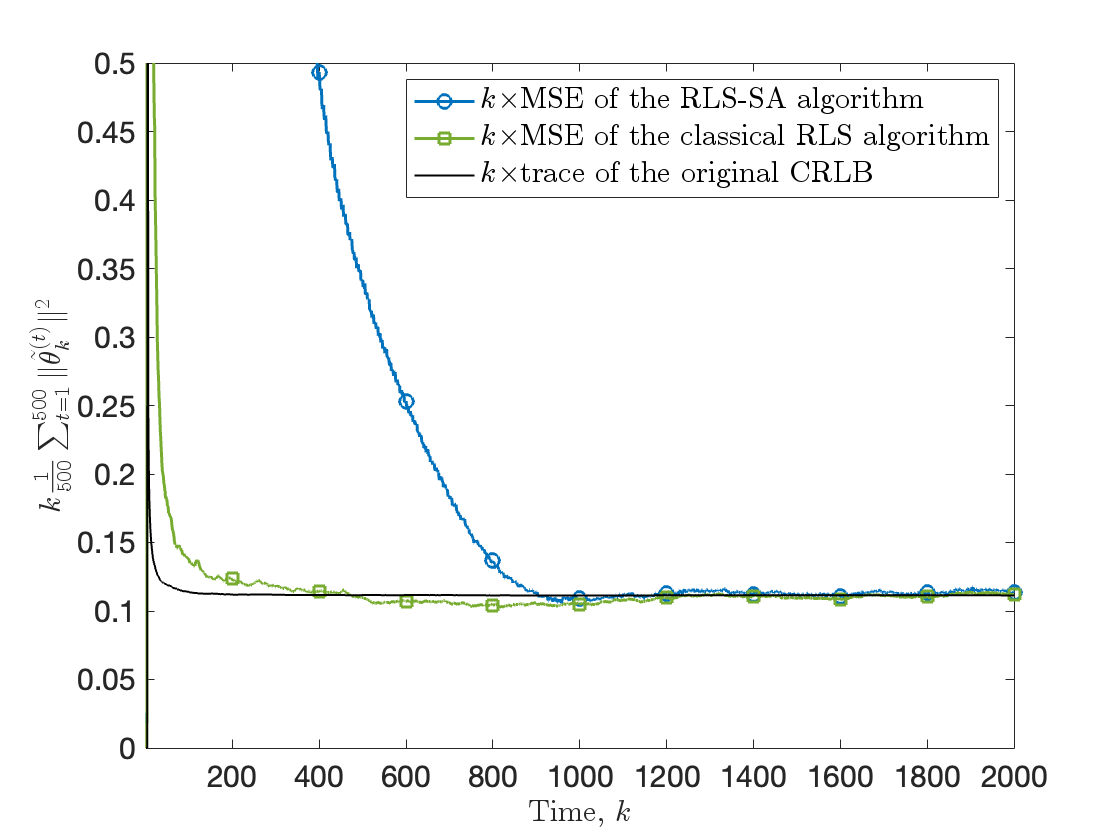}
\caption{Asymptotic efficiency of the RLS-SA algorithm}
\label{fig_d1}
\end{figure}

Additionally, we conducted 500 simulations to calculate the MSE of the RLS-SA algorithm and compared the results with those from the RLS algorithm.
Specifically, we denote the parameter estimation error in the \( t \)-th simulation at time $k$ as \( \tilde{\theta}_{k}^{(t)} \), and use \( \sum_{t=1}^{500} \Vert \tilde{\theta}_{k}^{(t)} \Vert^2 /500\) as the empirical MSE at time $k$.  
Figure \ref{fig_d1} demonstrates that after 1000 steps, the $k$ times MSE of the proposed RLS-SA algorithm, which relies on one-bit information, aligns with that of the RLS algorithm, thereby asymptotically approaching the $k$ times trace of the original CRLB.
This confirms the asymptotic efficiency of the proposed RLS-SA algorithm.

\subsection{Comparison 1: MSE comparison with the quantized recursive identification algorithm for the ARX system}

In \citet{csaji2012recursive}, the researchers have addressed the quantized identification problem for ARX systems with stochastic inputs that are independent and identically distributed, employing a dynamic quantizer in their approach. 
For our comparative analysis, we consider an ARX system as:
\begin{align*}
y_{k} + a_{1}y_{k-1}  =  b_{1}u_{k-1} + d_{k},\quad k \geq 1,
\end{align*}
where the system input $u_{k}$ follows $\mathcal{N}(0,1)$; the unknown parameter $\theta = [a_{1},b_{1}]^{T}=[0.2,1]^{T}$; the observation noise $d_{k}$ follows $\mathcal{N}(0,1)$.

We apply the RLS-SA algorithm with initial conditions \( P_{0} = I_{2}/10 \), \( \hat{\theta}^{\text{RLS}}_{0} = [0,0]^{T} \), and \( \hat{\theta}_{0} = [0,0]^{T} \), where the step-size coefficients are set as \( \alpha = 0.95 \) and \( \beta_{k} \equiv 1 \). Additionally, to facilitate comparison, we apply the sign-error type algorithm proposed by \citet{csaji2012recursive} to address the identification problem. 

The simulation is repeated 500 times. Figure \ref{fig_e1} shows that the $k$ times MSE of the sign-error type algorithm is larger than that of the RLS-SA algorithm, indicating that the performance of the proposed algorithm is superior.

\begin{figure}[h]
\noindent\includegraphics[width=0.48\textwidth]{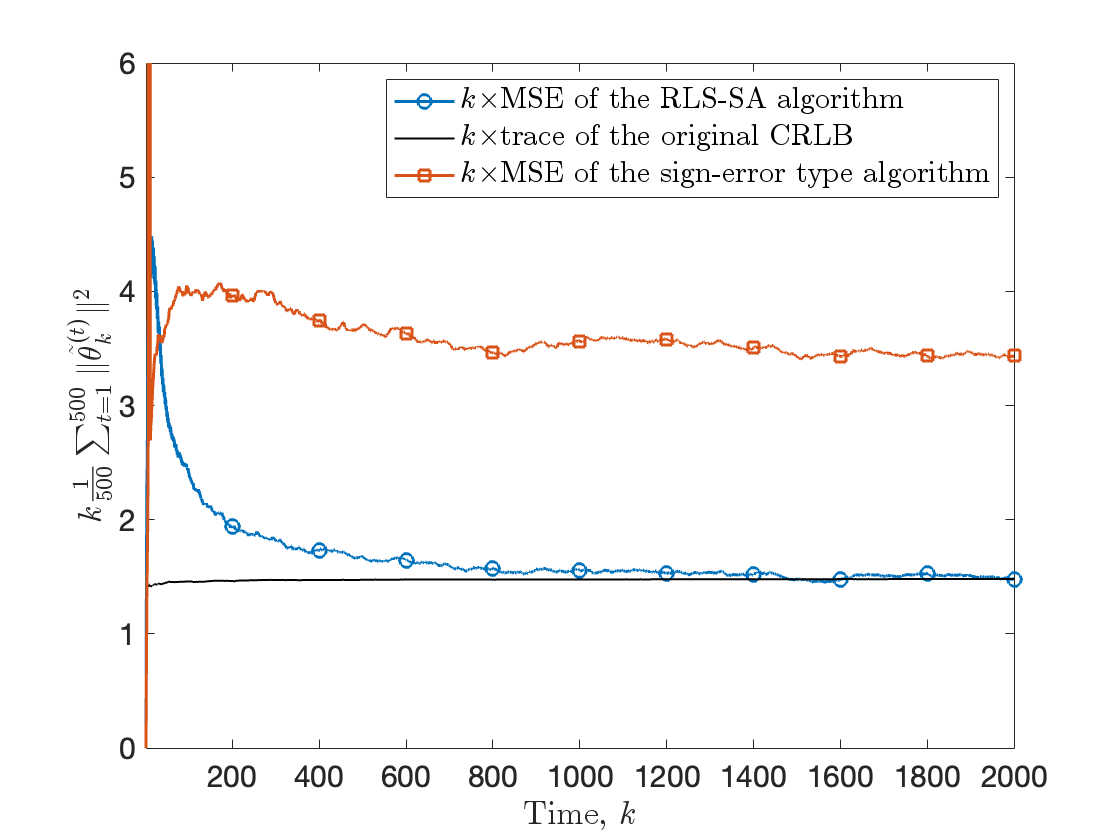}
\caption{Trajectories of $k$ times MSE by the RLS-SA algorithm and the sign-error type algorithm \citep{csaji2012recursive}}
\label{fig_e1}
\end{figure}

\subsection{Comparison 2: MSE comparison with the asymptotically efficient algorithms using a fixed quantizer}

In \citet{yin2007asymptotically} and \citet{wang2024asymptotically}, the researchers have addressed the quantized identification problem for FIR systems with deterministic inputs.
For our comparative analysis, we consider an FIR system as:
\begin{align*}
y_{k} =  b_{1}u_{k-1} + b_{2}u_{k-2} + b_{3}u_{k-3} + d_{k},\quad k \geq 1,
\end{align*}
where the system input $u_{k} = \cos(2\pi k/3)$; the unknown parameter $\theta = [b_{1},b_{2},b_{3}]^{T}=[0.7,-0.3,0.2]^{T}$; the observation noise $d_{k}$ follows $\mathcal{N}(0,4)$.

We apply the RLS-SA algorithm with initial conditions \( P_{0} = I_{3}/10 \), \( \hat{\theta}^{\text{RLS}}_{0} = [0,0,0]^{T} \), and \( \hat{\theta}_{0} = [0,0,0]^{T} \), where the step-size coefficients are set as \( \alpha = 0.95 \) and \( \beta_{k} \equiv 1 \). Additionally, we use the empirical measure method proposed by \citet{yin2007asymptotically} and the IBID algorithm proposed by \citet{wang2024asymptotically} as alternative estimation methods, both considered asymptotically optimal with a fixed quantizer,  with the threshold set to zero.

The simulation is repeated 500 times. Fig. \ref{fig_f1} compares the MSE performance. 
The $k$ times MSE trajectories for both the empirical measure method and the IBID algorithm approach the $k$ times trace of the CRLB for the fixed threshold, as shown in \citet{yin2007asymptotically, wang2024asymptotically}. 
Notably, the $k$ times trace of the CRLB for the fixed threshold is greater than that for the original system output, which indicates that the proposed algorithm outperforms those in \citet{yin2007asymptotically, wang2024asymptotically}.

\begin{figure}[h]
\noindent\includegraphics[width=0.48\textwidth]{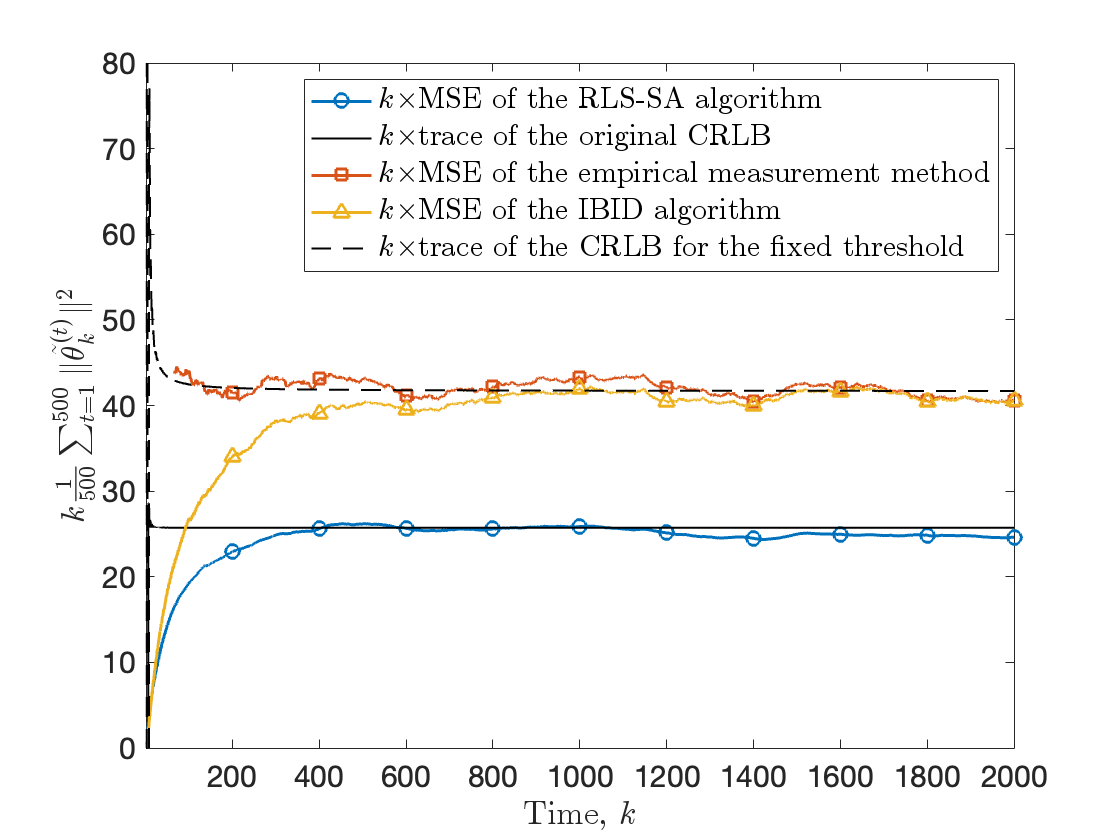}
\caption{Trajectories of $k$ times MSE by the RLS-SA algorithm, the empirical measure method \citep{yin2007asymptotically}, and the IBID algorithm \citep{wang2024asymptotically}}
\label{fig_f1}
\end{figure}

\subsection{Comparison 3: MSE comparison with the asymptotically efficient algorithm using an adaptive quantizer} 

In \citet{you2015recursive}, the researcher has addressed the quantized identification problem for linear systems with stochastic inputs that are independent and identically distributed, employing a dynamic quantizer in their approach. 
For our comparative analysis, we consider a linear system as:
\begin{align*}
y_{k} =  b_{1}u_{k-1} + d_{k},\quad k \geq 1,
\end{align*}
where the system input $u_{k}$ follows a uniform distribution in the range $[-10,10]$; the unknown parameter $\theta = b_{1}=0.5$; the observation noise $d_{k}$ follows $\mathcal{N}(0,1)$.

We apply the RLS-SA algorithm with initial conditions \( P_{0} = 0.1 \), \( \hat{\theta}^{\text{RLS}}_{0} = 0 \), and \( \hat{\theta}_{0} = 0 \), where the step-size coefficients are set as \( \alpha = 0.95 \) and \( \beta_{k} \equiv 1 \). Additionally, we use the optimal Newton-based estimator proposed by \citet{you2015recursive} to provide an alternative estimation, considered asymptotically optimal with adaptive thresholds.

The simulation is repeated 500 times. 
Fig. \ref{fig_e} compares the MSE performance. 
The trajectory of the sample $k$ times MSE for the RLS-SA scheme asymptotically approaches the $k$ times trace of the original CRLB, while the $k$ times MSE trajectory of the optimal Newton-based estimator reaches the $k$ times trace of the CRLB for the adaptive threshold. 

\begin{figure}[h]
\noindent\includegraphics[width=0.48\textwidth]{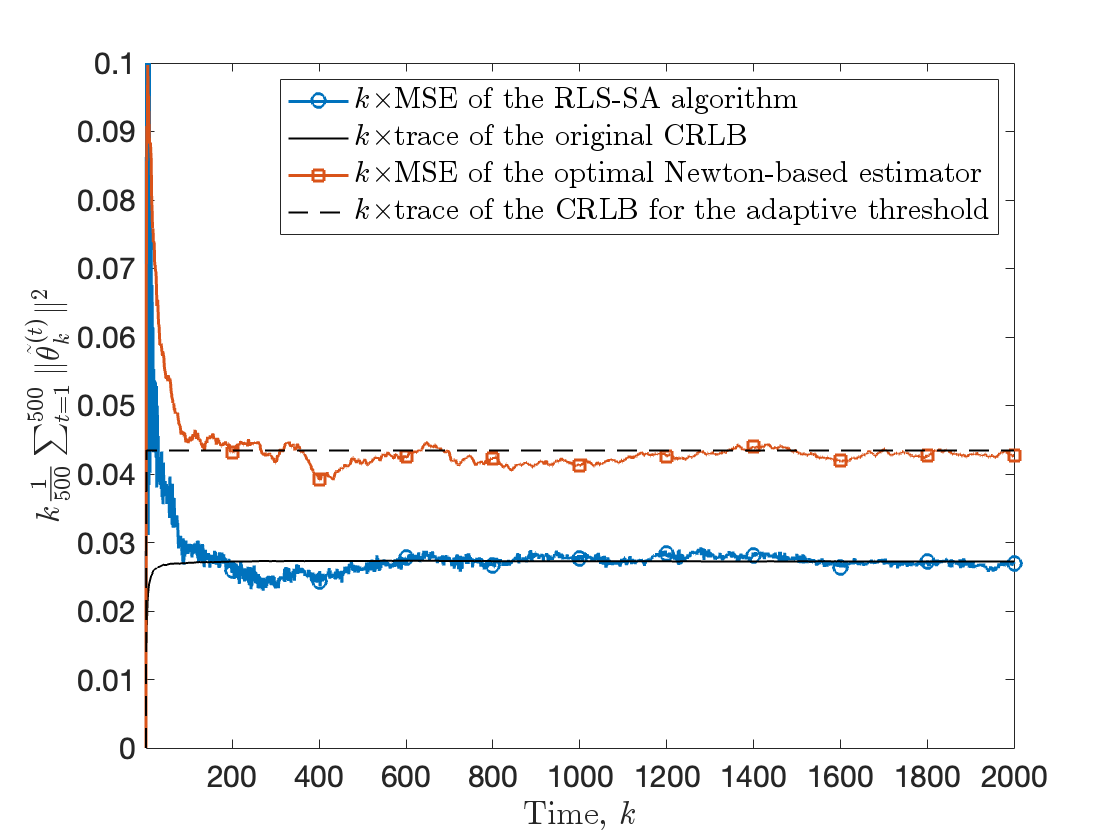}
\caption{Trajectories of $k$ times MSE by the RLS-SA algorithm and the optimal Newton-based estimator \citep{you2015recursive}}
\label{fig_e}
\end{figure}

Notably, Figure \ref{fig_f} demonstrates that the ratio of the MSE of the RLS-SA algorithm to that of the optimal Newton-based estimator converges to $2/\pi \approx 0.64$.
It suggests that, compared to existing identification algorithms under one-bit data, the RLS-SA algorithm reduces the asymptotic MSE by at least $1-2/\pi \approx 36\%$.
Furthermore, since the adaptive threshold proposed by \citet{you2015recursive} converges to the optimal adaptive threshold \citep{wang2024threshold}, this indicates that the original CRLB is $2/\pi \approx 0.64$ times that for the optimal adaptive threshold, which aligns with the analysis presented in \citet{you2015recursive} and \citet{wang2024threshold}. 

\begin{figure}[h]
\noindent\includegraphics[width=0.48\textwidth]{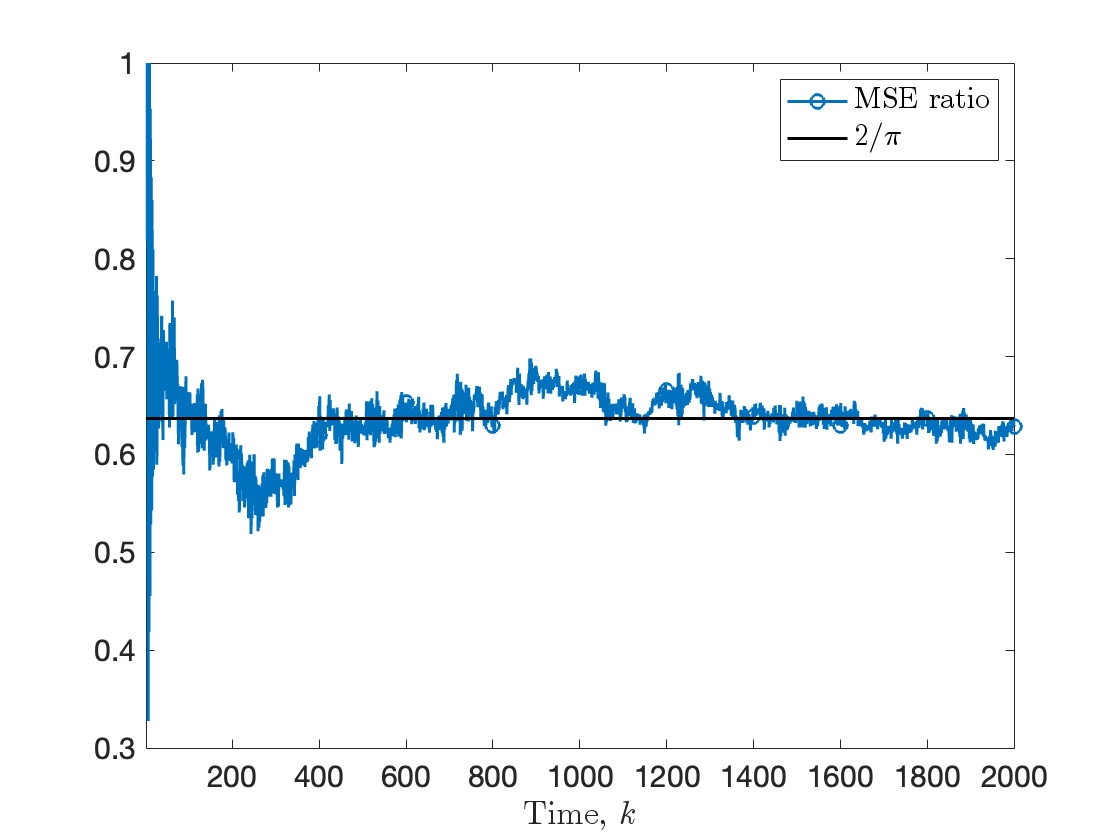}
\caption{Ratio of the MSE by the RLS-SA algorithm and the optimal Newton-based estimator \citep{you2015recursive}}
\label{fig_f}
\end{figure}

\section{Conclusion}\label{sec_f}
This paper proposes an asymptotically efficient recursive identification algorithm for ARX systems under one-bit communications.  
We design a novel quantizer that utilizes both current and historical system outputs and inputs to extract critical parameter information before quantization.
The remote estimator recursively identifies the system using only one-bit data without direct access to the system inputs and outputs.
The remote estimate achieves asymptotic normality, with its error covariance matrix converging to the original CRLB, confirming its asymptotic efficiency. 

Here, we give three topics for future research. 
First, extending ARX systems to more general models, such as autoregressive moving-average systems with exogenous inputs.
Second, will there be any improvement in estimation performance if the parametric knowledge is extracted by another estimation method, such as the instrumental variables estimation method?
Third, an important extension is to investigate the impact of channel imperfections, such as additive noise or packet drops, on the system identification performance.

\appendix

\section{Lemmas} \label{aap_a}

\begin{lemmax}\label{lemma_aaa222} (Theorem 4.5.2 in \citet{Chung2001}) If the random process $\{X_{k}\}$ converges to $X$ in distribution, and for some $p > 0$, $\sup_{k}\mathbb{E}[\Vert X_{k} \Vert^{p}] < \infty$, then, for each $r < p$,
\[
\lim\limits_{k \to \infty} \mathbb{E}[\Vert X_{k} \Vert^{r}] = \mathbb{E}[\Vert X \Vert^{r}] < \infty.
\]
\end{lemmax}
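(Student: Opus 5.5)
This lemma is a classical uniform-integrability result. I would prove it by reducing to the scalar variables $Y_{k}\triangleq\Vert X_{k}\Vert^{r}$ and $Y\triangleq\Vert X\Vert^{r}$.

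\textbf{Step 1 (reduction to scalars).} Since $x\mapsto\Vert x\Vert^{r}$ is continuous, the continuous mapping theorem gives $Y_{k}\xrightarrow{d}Y$. The moment hypothesis becomes $\sup_{k}\mathbb{E}[Y_{k}^{p/r}]\le M<\infty$, and here $p/r>1$.

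\textbf{Step 2 (finiteness of $\mathbb{E}[\Vert X\Vert^{r}]$).} I would apply the portmanteau theorem to the bounded continuous function $\min\{\Vert x\Vert^{p},N\}$. This gives $\mathbb{E}[\min\{\Vert X\Vert^{p},N\}]=\lim_{k}\mathbb{E}[\min\{\Vert X_{k}\Vert^{p},N\}]\le M$. Letting $N\to\infty$ and using monotone convergence yields $\mathbb{E}[\Vert X\Vert^{p}]\le M$. Hence $\mathbb{E}[\Vert X\Vert^{r}]<\infty$ by Lyapunov's inequality (or H\"older's inequality).

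\textbf{Step 3 (truncation and passage to the limit).} Fix $N>0$ and split $Y_{k}=\min\{Y_{k},N\}+(Y_{k}-N)^{+}$. The first piece is a bounded continuous function of $X_{k}$, so $\mathbb{E}[\min\{Y_{k},N\}]\to\mathbb{E}[\min\{Y,N\}]$ by the definition of convergence in distribution. For the tail piece, I would use the uniform bound
\[
\mathbb{E}\big[(Y_{k}-N)^{+}\big]\le\mathbb{E}\big[Y_{k}I_{\{Y_{k}>N\}}\big]\le \mathbb{E}\big[Y_{k}^{p/r}\big]N^{1-p/r}\le MN^{1-p/r}.
\]
This follows because $Y_{k}\le Y_{k}^{p/r}N^{1-p/r}$ on $\{Y_{k}>N\}$. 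The same bound holds for $Y$ by Step 2. Therefore
\[
\limsup_{k\to\infty}\big\vert\mathbb{E}[Y_{k}]-\mathbb{E}[Y]\big\vert\le 2MN^{1-p/r}.
\]
Since $p/r>1$, letting $N\to\infty$ gives $\lim_{k}\mathbb{E}[\Vert X_{k}\Vert^{r}]=\mathbb{E}[\Vert X\Vert^{r}]$.

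The only real obstacle is the uniform tail control in Step 3, that is, establishing uniform integrability of $\{Y_{k}\}$. This is exactly where the strict inequality $r<p$ is essential: the argument fails for $r=p$, because then the factor $N^{1-p/r}=1$ does not vanish as $N\to\infty$.

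The case $r\le0$ needs separate handling. For $r=0$ the statement is trivial. Negative $r$ is not needed in the paper's application, where $r=2<p=4$, so I would restrict to $0<r<p$. As an alternative route, I could invoke Skorokhod's representation together with Vitali's convergence theorem, but the truncation argument above is self-contained.
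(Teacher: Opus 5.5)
Your proof is correct and is essentially the standard truncation argument behind the result the paper cites; the paper gives no proof of its own beyond the reference to Chung's Theorem 4.5.2. Chung's argument has the same ingredients as yours: a Fatou-type bound $\mathbb{E}[\Vert X\Vert^{p}]\le M$, truncation by a bounded continuous function, and the uniform tail estimate $\mathbb{E}[\Vert X_k\Vert^{r}I_{\{\Vert X_k\Vert>A\}}]\le M/A^{p-r}$, which is your $MN^{1-p/r}$ with $N=A^{r}$. Your restriction to $0<r<p$ is in fact necessary rather than a convenience, since the statement fails for $r<0$ (take $X_k=1/k\to X=0$ with $r=-1$, so that $\mathbb{E}[\Vert X\Vert^{r}]=\infty$).
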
  

\begin{lemmax}\label{lemma_b22} (Corollary 2.1 in \citet{Liu2025})
Under Assumptions \ref{ass_a}, for any positive integer $\gamma$,
\begin{align}
\label{lemma_b_4} & \sup_{k}\mathbb{E}\left[\Vert \phi_{k} \Vert^{\gamma}\right] < \infty.
\end{align}
\end{lemmax}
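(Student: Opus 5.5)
This uniform moment bound follows from the representation of $y_k$ and $u_k$ in Assumption~\ref{ass_a} as uniformly stable linear filters of the bounded deterministic sequence $\{r_k\}$ plus the independent noise $\{e_k\}$. Since $\phi_k$ has finite dimension $m+n$, we have
\[
\Vert\phi_k\Vert^{\gamma}\le (m+n)^{\gamma/2}\max_i|\phi_{k,i}|^{\gamma}\le (m+n)^{\gamma/2}\sum_i|\phi_{k,i}|^{\gamma}.
\]
So it suffices to show $\sup_k \mathbb{E}[|y_k|^{\gamma}]<\infty$ and $\sup_k\mathbb{E}[|u_k|^{\gamma}]<\infty$ for every positive integer $\gamma$. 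We may also take $\gamma$ even, since the $L^{\gamma}$ norms increase in $\gamma$. The stipulation $y_k=0$ for $k\le0$ and $u_k=0$ for $k<0$ makes the boundary terms trivial.

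Split $y_k=D_k+S_k$, with deterministic part $D_k=\sum_{i\ge1}f^{(1)}_i(k)r_{k-i}$ and stochastic part $S_k=\sum_{i\ge0}f^{(2)}_i(k)e_{k-i}$. By uniform stability, $|D_k|\le \sup_j|r_j|\sum_i|f_i|$, which is bounded uniformly in $k$. By the $C_r$-inequality, $\mathbb{E}|y_k|^{\gamma}\le 2^{\gamma-1}(|D_k|^{\gamma}+\mathbb{E}|S_k|^{\gamma})$, so only $S_k$ needs work. The input $u_k$ is handled identically with the filters $f^{(3)},f^{(4)}$; the $i=0$ term is absorbed by enlarging $\sum|f_i|$ with $\sup_{k,j}|f_0^{(j)}(k)|$.

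For $S_k$, I would use Minkowski's inequality in $L^{\gamma}$ applied to the absolutely convergent series:
\[
\big(\mathbb{E}|S_k|^{\gamma}\big)^{1/\gamma}\le \sum_{i\ge0}|f^{(2)}_i(k)|\,\big(\mathbb{E}|e_{k-i}|^{\gamma}\big)^{1/\gamma}\le \Big(\sup_j \mathbb{E}|e_j|^{\gamma}\Big)^{1/\gamma}\sum_{i\ge0}|f_i| .
\]
This is finite uniformly in $k$ because the moments of $e_j$ of every positive integer order are uniformly bounded. The series $S_k$ converges in $L^{\gamma}$ and almost surely by the same domination, so applying Minkowski to the infinite sum is legitimate: apply it to partial sums and pass to the limit with Fatou's lemma. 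Independence of the $e_j$ is not even needed for this crude bound. Alternatively, a Rosenthal/Burkholder-type inequality would give a sharper constant, but that is unnecessary here.

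The only delicate point is justifying the interchange of the infinite filter sum with the $L^{\gamma}$ norm. I would handle it by the partial-sum and Fatou argument above, which uses only $\sum|f_i|<\infty$ and the uniform moment bound on $e_k$. Combining the pieces gives $\sup_k\mathbb{E}\Vert\phi_k\Vert^{\gamma}<\infty$, i.e.\ (\ref{lemma_b_4}).
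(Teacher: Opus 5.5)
Your proof is correct and follows essentially the same route as the paper. The paper only cites this lemma from \citet{Liu2025}, but its proof of Lemma~\ref{lemma_b21} uses the same split $y_k=w_k+v_k$ together with the $C_r$-inequality. It then bounds $\mathbb{E}\big[v_k^{2\lceil \gamma/\mu \rceil}\big]$ by expanding the power into a multiple sum, applying H\"older to each mixed moment of the $e$'s, and summing to $O\big((\sum_{i\ge0}|f^{(2)}_i(k)|)^{2\lceil\gamma/\mu\rceil}\big)$. That is exactly your Minkowski bound raised to that power, and, like yours, it never actually uses independence.

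Your handling of the $i=0$ coefficients, which Assumption~\ref{ass_a} literally bounds only for $i\ge1$, matches the paper's implicit use of $\sum_{i\ge0}|f^{(2)}_i(k)|=O(1)$ in that same proof.
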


\begin{lemmax}\label{lemma_b21}
Under Assumptions \ref{ass_a}, for any positive integer $\gamma$ and any positive constant $\mu$,
\begin{align}
\label{lemma_b_3} & \mathbb{P}\left(\left\Vert \phi_{k+1} \right\Vert > k^{\mu} \right) = O\left(\frac{1}{k^{\gamma}}\right).
\end{align}
\end{lemmax}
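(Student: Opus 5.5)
The statement to prove is Lemma \ref{lemma_b21}: $\mathbb{P}(\Vert \phi_{k+1}\Vert > k^{\mu}) = O(1/k^{\gamma})$ for any positive integer $\gamma$ and any $\mu>0$. The plan is to reduce it to the uniform moment bound of Lemma \ref{lemma_b22} through Markov's inequality applied with a sufficiently high moment.

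Fix $\gamma$ and $\mu$, and choose the integer $r \triangleq \lceil \gamma/\mu \rceil$. Then $r\mu \ge \gamma$. Markov's inequality applied to $\Vert\phi_{k+1}\Vert^{r}$ gives
\begin{align*}
\mathbb{P}\left(\Vert \phi_{k+1}\Vert > k^{\mu}\right) = \mathbb{P}\left(\Vert \phi_{k+1}\Vert^{r} > k^{r\mu}\right) \le \frac{\mathbb{E}\left[\Vert \phi_{k+1}\Vert^{r}\right]}{k^{r\mu}}.
\end{align*}
By Lemma \ref{lemma_b22} with $\gamma$ replaced by $r$, we have $\sup_{k}\mathbb{E}[\Vert\phi_{k}\Vert^{r}] \triangleq M_r<\infty$. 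The right-hand side is therefore at most $M_r k^{-r\mu} \le M_r k^{-\gamma}$ for $k\ge 1$, which is the claim.

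The only substantive input is Lemma \ref{lemma_b22}, that is, uniform boundedness of all integer moments of the regressor. This rests on Assumption \ref{ass_a}: the outputs and inputs are uniformly stable filters of a bounded deterministic sequence $r_k$ plus independent $e_k$ with all moments finite. If one had to verify it directly, the step would be to bound $\mathbb{E}|\sum_i f_i^{(2)}(k)e_{k-i}|^{r}$ uniformly in $k$. Minkowski's inequality together with $\sum_i|f_i|<\infty$ and $\sup_k\mathbb{E}|e_k|^r<\infty$ gives this. Since the lemma is already stated earlier, there is no real obstacle: the proof is a one-line Markov bound, and the only care needed is choosing the moment order large enough that $r\mu\ge\gamma$.
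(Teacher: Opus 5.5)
Your proof is correct, and it takes a shorter route than the paper's. You apply Markov's inequality to $\Vert\phi_{k+1}\Vert^{r}$ with $r=\lceil\gamma/\mu\rceil$ and invoke the uniform moment bound of Lemma \ref{lemma_b22} for the whole regressor. That bound is legitimately available, since it is stated (as a cited corollary, under the same Assumption \ref{ass_a}) before this lemma.

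The paper does not use Lemma \ref{lemma_b22} here. Instead it:
\begin{itemize}
\item union-bounds over the $m+n$ scalar components $y_{k-i}$ and $u_{k-j}$;
\item splits each component into a bounded deterministic part $w_k$ plus a stochastic part $v_k$, and absorbs $w_k$ into the threshold for large $k$;
\item bounds $\mathbb{E}[v_k^{2\lceil\gamma/\mu\rceil}]$ uniformly in $k$ by expanding the power as a multiple series, controlling each mixed moment with H\"older's inequality and the resulting sum by $(\sum_i|f_i|)^{2\lceil\gamma/\mu\rceil}$;
\item applies Markov's inequality componentwise.
\end{itemize}

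So the paper's argument is self-contained from Assumption \ref{ass_a}, at the cost of a longer computation. Yours makes the lemma an immediate corollary of Lemma \ref{lemma_b22}, and the choice $r\mu\ge\gamma$ with $k\ge 1$ closes it correctly.

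Your side remark, that the uniform moment bound follows from Minkowski's inequality in $L^{r}$, is actually a cleaner way to do what the paper does by expansion. It handles the infinite filter sum directly, uses only $\sup_k\mathbb{E}|e_k|^{r}<\infty$ and $\sum_i|f_i|<\infty$, and avoids justifying the interchange of expectation with the multiple series that the paper's expansion implicitly requires.
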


\begin{pf}
By DeMorgan's Law \citep{casella2001statistical} and Boole's Inequality \citep{casella2001statistical}, we have
\begin{align}\label{uuuqqq}
& \mathbb{P}\left( \left\Vert \phi_{k+1} \right \Vert > k^{\mu}\right) \\
= & \mathbb{P}\left( \sum_{i=0}^{m-1}y_{k-i}^{2} + \sum_{j=0}^{n-1}u_{k-j}^{2} > k^{2\mu}\right) \nonumber \\
\leq & \sum_{i=0}^{m-1}   \mathbb{P}\left( y_{k-i}^{2} > \frac{k^{2\mu}}{m+n} \right)\nonumber  + \sum_{j=0}^{n-1}   \mathbb{P}\left( u_{k-j}^{2} > \frac{k^{2\mu}}{m+n} \right).
\end{align}
We consider $\mathbb{P}( y_{k}^{2} > k^{2\mu}/(m+n))$ first.
Define 
$w_{k} \triangleq \sum_{i=1}^{\infty}  f_{i}^{(1)}(k)r_{k-i}$ and  $v_{k}  \triangleq \sum_{i=0}^{\infty} f_{i}^{(2)}(k)e_{k-i}.$
Then, from Assumptions \ref{ass_a}, $\{w_{k}\}_{k=1}^{\infty}$ is bounded and deterministic, $\{v_{k}\}_{k=0}^{\infty}$ is stochastic, and $y_{k} = w_{k} + v_{k}.$
By the $C_{r}$-inequality \citep{Loeve1977}, we obtain $\mathbb{P}( y_{k}^{2} > k^{2\mu}/(m+n)) \leq \mathbb{P}( 2v_{k}^{2} + 2 w_{k}^{2}  > k^{2\mu}/(m+n)) = \mathbb{P}( v_{k}^{2} > k^{2\mu}/(2(m+n)) - w_{k}^{2}).$

Since $\{w_{k}\}_{k=1}^{\infty}$ is a bounded sequence, for sufficiently large $k$, we have $\mathbb{P}( v_{k}^{2} > k^{2\mu}/(2(m+n)) - w_{k}^{2}) \leq \mathbb{P}( v_{k}^{2} > k^{2\mu}/(3(m+n)) )$.

Note that $v_{k}^{2\lceil \gamma/\mu \rceil}$ can be expressed as $v_{k}^{2\lceil \gamma/\mu \rceil} = \sum_{i_{1}=0}^{\infty}\cdots\sum_{i_{2\lceil \gamma/\mu \rceil}=0}^{\infty} \prod_{t=1}^{2\lceil \gamma/\mu \rceil}f_{i_{t}}^{(2)}(k)e_{k-i_{t}}.$
Since $\{e_{k}\}_{k=0}^{\infty}$ is a sequence of independent variables with bounded moments of any positive integer order, for each $i_{1},i_{2},\ldots, i_{2\lceil \gamma/\mu \rceil} \in \{ 0,1,2,\ldots \}$, by H\"{o}lder's Inequality \citep{Loeve1977}, we have $ \vert \mathbb{E}[\prod_{t=1}^{2\lceil \gamma/\mu \rceil}e_{k-i_{t}}]\vert \leq  \prod_{t=1}^{2\lceil \gamma/\mu \rceil} (\mathbb{E}[e_{k-i_{t}}^{2\lceil \gamma/\mu \rceil}])^{1/(2\lceil \gamma/\mu \rceil)} = O(1).$

Then, from Assumptions \ref{ass_a}, it holds that $\mathbb{E}[v_{k}^{2\lceil \gamma/\mu \rceil}] 
= \sum_{i_{1}=0}^{\infty}\cdots\sum_{i_{2\lceil \gamma/\mu \rceil}=0}^{\infty}((  \prod_{t=1}^{2\lceil \gamma/\mu \rceil}f_{i_{t}}^{(2)}(k))\mathbb{E}[\prod_{t_2=1}^{2\lceil \gamma/\mu \rceil}e_{k-i_{t_2}}] ) \\
= O( \sum_{i_{1}=0}^{\infty}\cdots\sum_{i_{2\lceil \gamma/\mu \rceil}=0}^{\infty}\prod_{t=1}^{2\lceil \gamma/\mu \rceil}f_{i_{t}}^{(2)}(k) ) \\
=  O( (\sum_{i=0}^{\infty}\vert f_{i}^{(2)}(k)\vert)^{2\lceil \gamma/\mu \rceil} ) 
=  O( 1 ).$
By the Markov inequality \citep{Loeve1977}, one can get that $\mathbb{P}( v_{k}^{2} > k^{2\mu}/(3(m+n))) \leq (3^{\lceil \gamma/\mu \rceil}(m+n)^{\lceil \gamma/\mu \rceil}\mathbb{E}[v_{k}^{2\lceil \gamma/\mu \rceil}])/k^{2\lceil \gamma/\mu \rceil\mu} \\ = O(1/k^{2\gamma}) = O(1/k^{\gamma})$, which implies that $\mathbb{P}( y_{k}^{2} > k^{2\mu}/(m+n) ) = O(1/k^{\gamma}).$

Similarly, for each $i \in \{ 2,3,4,\ldots,m-1 \}$ and $j \in \{ 0,1,2,\ldots,n-1 \}$, it can be proved that $\mathbb{P}( y_{k-i}^{2} > k^{2\mu}/(m+n)) = O(1/k^{\gamma})$ and $\mathbb{P}( u_{k-j}^{2} > k^{2\mu}/(m+n)) = O(1/k^{\gamma})$. 
Then, (\ref{uuuqqq}) implies (\ref{lemma_b_3}) holds. {\hfill $\qed$}
\end{pf}

\begin{lemmax}\label{lemma_b12} (Corollary 3.1 in \citet{Liu2025})
Under Assumptions \ref{ass_a}, for any positive integer $\gamma$ and positive constant $\epsilon$, 
\begin{align}
\label{lemma_b_1} & \mathbb{P}\left(\left\Vert  \bar{\mathbb{E}}\left[ \{ \phi_{k}\phi_{k}^{T}\}_{k=1}^{\infty}\right] - \frac{P_{k}^{-1}}{k} \right\Vert > \varepsilon\right) = O\left(\frac{1}{k^{\gamma}}\right), \\
\label{lemma_b_2}  & \mathbb{P}\left(\left\Vert P_{k} \right\Vert > \frac{2}{k} \Vert \left(\bar{\mathbb{E}}\left[ \{ \phi_{k}\phi_{k}^{T}\}_{k=1}^{\infty}\right]\right)^{-1} \Vert\right) = O\left(\frac{1}{k^{\gamma}}\right).
\end{align}
\end{lemmax}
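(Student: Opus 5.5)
The plan is to work directly with the information-matrix form of the RLS recursion and reduce both claims to a high-order moment bound for a centered sample covariance. By the matrix inversion lemma applied to (\ref{RLS2})--(\ref{RLS3}),
\[
P_{k}^{-1} = P_{0}^{-1} + \sum_{l=1}^{k}\phi_{l}\phi_{l}^{T}.
\]
Write $R \triangleq \bar{\mathbb{E}}[\{\phi_{k}\phi_{k}^{T}\}_{k=1}^{\infty}]$. Then
\[
\frac{P_{k}^{-1}}{k} - R = \frac{P_{0}^{-1}}{k} + \frac{1}{k}\sum_{l=1}^{k}\left(\phi_{l}\phi_{l}^{T} - \mathbb{E}[\phi_{l}\phi_{l}^{T}]\right) + \left(\frac{1}{k}\sum_{l=1}^{k}\mathbb{E}[\phi_{l}\phi_{l}^{T}] - R\right).
\]
The first and third terms are deterministic and tend to zero. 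For the first this is immediate, and for the third it is exactly the existence of the limits in Assumption \ref{ass_a} (Definitions \ref{def_a}--\ref{def_b} together with the joint quasi-stationarity). Hence for $k$ large both deterministic terms are below $\varepsilon/2$. Writing $S_{k}$ for the centered sum $\sum_{l=1}^{k}(\phi_{l}\phi_{l}^{T}-\mathbb{E}[\phi_{l}\phi_{l}^{T}])$, it suffices to show $\mathbb{P}(\Vert S_{k}\Vert > k\varepsilon/2) = O(1/k^{\gamma})$.

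For this I will bound a high moment of $S_{k}$ and then apply the Markov inequality. Each entry of $\phi_{l}\phi_{l}^{T}$ has the form $z_{l-i}z'_{l-j}$, with $z,z'\in\{y,u\}$. Using the filter representation in Assumption \ref{ass_a}, each $z_{l} = w_{l} + v_{l}$ splits into a bounded deterministic part and a linear filter of the independent $e$'s. The centered product $z_{l-i}z'_{l-j}-\mathbb{E}[\cdot]$ is then a sum of two kinds of terms:
\begin{itemize}
\item a linear form in the $e$'s (the cross terms $w\,v$);
\item a centered quadratic form in the $e$'s (the $v\,v$ terms).
\end{itemize}
The target is
\[
\mathbb{E}\left[\Vert S_{k}\Vert^{2q}\right] = O(k^{q})
\]
for every integer $q$, obtained by expanding the $2q$-th power into sums over indices of products of $e$'s. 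Since the $e$'s are independent and centered, any index appearing exactly once kills the expectation. The bounded moments of all orders, via H\"older's inequality as in the proof of Lemma \ref{lemma_b21}, control every surviving expectation by a constant. Choosing $q=\gamma$ and applying the Markov inequality then gives
\[
\mathbb{P}(\Vert S_{k}\Vert > k\varepsilon/2) \le \frac{C k^{\gamma}}{(k\varepsilon/2)^{2\gamma}} = O(1/k^{\gamma}),
\]
which is (\ref{lemma_b_1}).

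The main obstacle is this moment bound. The filters are only uniformly summable, with $\sup_{k,j}|f_{i}^{(j)}(k)|\le |f_{i}|$ and $\sum|f_{i}|<\infty$; no decay rate is assumed, so the terms of $S_{k}$ are not $m$-dependent. I will handle this with a cumulant/pairing argument. Expanding the $2q$-th moment as a sum over $l_{1},\dots,l_{2q}$, the expectation factors into connected clusters in which every $e$-index is shared by at least two factors. For two time indices $l,s$ linked by a shared $e$, the weight is bounded by $\sum_{i}|f_{i}||f_{i+|l-s|}|$. Summing this over $|l-s|$ gives at most $(\sum_{i}|f_{i}|)^{2}<\infty$, so each connected cluster contributes $O(k)$ from its single free time index. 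Because every cluster contains at least two of the $2q$ factors, there are at most $q$ clusters, and the total is $O(k^{q})$. Making this counting rigorous, including the linear $w\,v$ terms and the mixed moments of orders $3$ and $4$ of the $e$'s, is where the technical work lies. If it becomes unwieldy, the fallback is to truncate the filters at length $L_{k} = k^{\delta}$ and bound the tails separately in $L^{2q}$.

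Finally, (\ref{lemma_b_2}) follows from (\ref{lemma_b_1}). Take $\varepsilon = \lambda_{\min}(R)/2$. On the event $\Vert R - P_{k}^{-1}/k\Vert \le \varepsilon$, Weyl's inequality gives $\lambda_{\min}(P_{k}^{-1}/k)\ge \lambda_{\min}(R)/2$. Therefore
\[
\Vert P_{k}\Vert = \frac{1}{\lambda_{\min}(P_{k}^{-1})} \le \frac{2}{k\,\lambda_{\min}(R)} = \frac{2}{k}\Vert R^{-1}\Vert,
\]
so the complementary event has probability $O(1/k^{\gamma})$.
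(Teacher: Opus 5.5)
The paper gives no proof of this lemma; it imports it as Corollary 3.1 of \citet{Liu2025}. Your proposal therefore does not reproduce an in-paper argument but supplies a self-contained one, and it is correct. The following steps all go through:
\begin{itemize}
\item the information form $P_k^{-1}=P_0^{-1}+\sum_{l=1}^{k}\phi_l\phi_l^T$;
\item the split into a deterministic bias plus the centered sum $S_k$ (the bias vanishes by the definition of $\bar{\mathbb{E}}$; you need the two deterministic terms \emph{jointly} below $\varepsilon/2$, not each);
\item the bound $\mathbb{E}[\Vert S_k\Vert^{2q}]=O(k^q)$, followed by the Markov inequality with $q=\gamma$;
\item the Weyl step for (\ref{lemma_b_2}), using $\Vert P_k\Vert=1/\lambda_{\min}(P_k^{-1})$ and $\Vert R^{-1}\Vert=1/\lambda_{\min}(R)$.
\end{itemize}
The citation buys brevity. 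Your route buys a self-contained paper, and it shows that the polynomial tail rate comes only from summable filters plus moments of all orders of $e_k$. That is exactly the strengthened moment hypothesis stressed in the remark after Assumption \ref{ass_c}. It also extends the expansion-plus-H\"older device of Lemma \ref{lemma_b21} by exploiting cancellation across time.

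Two points need tightening when you write out the moment bound.

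\emph{Why singleton clusters vanish.} They vanish because every monomial of the centered product is itself centered, not merely because ``an index appearing once'' kills the expectation. The diagonal pieces $e_s^2-\delta_e^2$ must be kept intact. If you split off the constant $-\delta_e^2$, the individual pieces are no longer centered and the $O(k^q)$ count fails.

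\emph{Avoiding the cluster bookkeeping.} You can replace it with standard inequalities.
\begin{itemize}
\item For a fixed entry, the $w\,v$ part of $S_k$ is $\sum_s c_s e_s$ with $\sup_s\vert c_s\vert<\infty$ and $O(k)$ nonzero coefficients. Rosenthal's inequality then gives $O(k^q)$.
\item The $v\,v$ part is $\sum_{s,t}A_{st}(e_se_t-\delta_e^2 I_{\{s=t\}})$. Its diagonal is handled the same way.
\item Its off-diagonal part is the martingale $\sum_t e_t\sum_{s<t}(A_{st}+A_{ts})e_s$. Apply Burkholder's inequality, then Minkowski's inequality on the square function, then Rosenthal's inequality for each inner sum. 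This bounds its $2q$-th moment by $O\big((\sum_{s,t}A_{st}^2)^q\big)$.
\item Finally, $\sum_{s,t}A_{st}^2\le\sum_{l,l'=1}^{k}\rho(l-l')^2\le k\sum_{d\in\mathbb{Z}}\rho(d)^2=O(k)$, where $\rho(d)\triangleq\sum_i\vert f_i\vert\vert f_{i+\vert d\vert}\vert$ is summable.
\end{itemize}
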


\section{Properties of the RLS algorithm} \label{aap_b}

\begin{lemmax}\label{lemma_aaa444} (Theorem 1 in \citet{Guo1986}, Theorems 3.2 and 3.3 in \citet{Liu2025})
Under Assumptions \ref{ass_a}-\ref{ass_c}, 
\begin{align}
& \left\Vert \tilde{\theta}_{k}^{\text{RLS}} \right\Vert = O\left(\sqrt{\frac{\log (1+\sum_{l=1}^{k}\Vert \phi_{l} \Vert^{2})}{\lambda_{\min}(P_{k}^{-1})}}\right) \quad  \mathrm{a.s.}
\end{align}
where $\tilde{\theta}_{k}^{\text{RLS}} \triangleq \hat{\theta}_{k}^{\text{RLS}} - \theta$ is the RLS estimation error.

In addition, for any positive integer $p$, 
\begin{align}
\label{lemma_e_1} & \mathbb{E}\left[ \left\Vert \tilde{\theta}_{k}^{\text{RLS}} \right\Vert^{p} \right] = O\left(\frac{1}{k^{p/2}}\right).
\end{align}
Furthermore, 
\begin{align}
\label{lemma_d_1} & \sqrt{k}\tilde{\theta}_{k}^{\text{RLS}} \xrightarrow{d}  \mathcal{N}\left(0, \bar{\Sigma}_{\mathrm{CR}}  \right),
\end{align}
where 
$
\bar{\Sigma}_{\mathrm{CR}} = \lim\limits_{k \to \infty}k\Sigma_{\mathrm{CR}}(k) = \delta_{d}^{2}(\bar{\mathbb{E}}[ \{ \phi_{k}\phi_{k}^{T}\}_{k=1}^{\infty}])^{-1},
$ and 
$\Sigma_{\mathrm{CR}}(k) = \delta_{d}^{2}(\sum_{l=1}^{k}\mathbb{E}[\phi_{l}\phi_{l}^{T}])^{-1}$ is the original CRLB.
\end{lemmax}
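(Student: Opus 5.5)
The plan is to work throughout with the closed form of the RLS error. From (\ref{RLS1})--(\ref{RLS3}) one has $P_k^{-1}=P_0^{-1}+\sum_{l=1}^{k}\phi_l\phi_l^T$. Since $y_l=\phi_l^T\theta+d_l$, this gives
\begin{align*}
\tilde{\theta}_k^{\text{RLS}}=P_k\Big(P_0^{-1}\tilde{\theta}_0^{\text{RLS}}+S_k\Big),\qquad S_k\triangleq\sum_{l=1}^{k}\phi_l d_l .
\end{align*}
Here $\phi_l$ is measurable with respect to $\mathcal{F}_{l-1}\triangleq\sigma\{d_j,e_j:j\le l-1\}$ and $d_l$ is independent Gaussian, so $S_k$ is a martingale with Gaussian differences. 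All three claims will be read off from this identity.

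\textbf{Almost sure rate.} I would invoke the self-normalized martingale bound of Lai--Wei/Guo. It states that
\begin{align*}
\|P_k^{1/2}S_k\|^2=O\big(\log\det P_k^{-1}\big)\quad\mathrm{a.s.}
\end{align*}
The Gaussian noise gives the required conditional moment condition. Combining this with $\|\tilde{\theta}_k^{\text{RLS}}\|^2\le 2\lambda_{\max}(P_k)\big(\|P_k^{1/2}S_k\|^2+O(1)\big)$ and with $\log\det P_k^{-1}\le (m+n)\log\lambda_{\max}(P_k^{-1})=O\big(\log(1+\sum_{l\le k}\|\phi_l\|^2)\big)$ yields the first display.

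\textbf{$L^p$ rate.} I split on the event $\mathrm{C}_k=\{\|P_k\|>2\|(\bar{\mathbb{E}}[\{\phi_k\phi_k^T\}])^{-1}\|/k\}$.
\begin{itemize}
\item On $\mathrm{C}_k^{\mathrm c}$, $\|\tilde{\theta}_k^{\text{RLS}}\|^p\le c\,k^{-p}\big(1+\|S_k\|^p\big)$. The Burkholder inequality, together with Lemma \ref{lemma_b22} and the Gaussian moments of $d_l$, gives $\mathbb{E}\|S_k\|^p=O(k^{p/2})$. This piece is therefore $O(k^{-p/2})$.
\item On $\mathrm{C}_k$, I use only the trivial bound $\|P_k\|\le\|P_0\|$, since $P_k$ is nonincreasing. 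Cauchy--Schwarz then gives a contribution of at most $c\,(\mathbb{E}(1+\|S_k\|^{2p}))^{1/2}\mathbb{P}(\mathrm{C}_k)^{1/2}=O(k^{p/2})\cdot O(k^{-\gamma/2})$. By Lemma \ref{lemma_b12}, $\gamma$ may be taken arbitrarily large, so this term is negligible.
\end{itemize}

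\textbf{Asymptotic normality.} I write
\begin{align*}
\sqrt{k}\,\tilde{\theta}_k^{\text{RLS}}=(kP_k)\,k^{-1/2}S_k+o_P(1).
\end{align*}
By (\ref{lemma_b_1}), $kP_k\to(\bar{\mathbb{E}}[\{\phi_k\phi_k^T\}])^{-1}$ in probability. For $k^{-1/2}S_k$ I apply the martingale CLT. Its conditional covariance is $\delta_d^2k^{-1}\sum_{l\le k}\phi_l\phi_l^T$, which converges in probability to $\delta_d^2\bar{\mathbb{E}}[\{\phi_k\phi_k^T\}]$, again by (\ref{lemma_b_1}) since $P_k^{-1}/k$ differs from it only by $P_0^{-1}/k$. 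The Lindeberg condition follows from $\sup_l\mathbb{E}\|\phi_l\|^4<\infty$ (Lemma \ref{lemma_b22}) and the Gaussianity of $d_l$. Slutsky's lemma then gives the limit $\mathcal{N}\big(0,\delta_d^2(\bar{\mathbb{E}}[\{\phi_k\phi_k^T\}])^{-1}\big)$.

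\textbf{Identifying the limit as the CRLB.} Since $d_l$ is Gaussian and $\phi_l$ is predetermined, the Fisher information of $\theta$ from $y_1,\dots,y_k$ is $\delta_d^{-2}\sum_{l\le k}\mathbb{E}[\phi_l\phi_l^T]$. Quasi-stationarity (Assumption \ref{ass_a}) makes $k$ times its inverse converge to $\bar{\Sigma}_{\mathrm{CR}}$.

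\textbf{Main obstacle.} The delicate step is the law of large numbers $k^{-1}\sum_l\phi_l\phi_l^T\to\bar{\mathbb{E}}[\{\phi_k\phi_k^T\}]$ under mere quasi-stationarity with deterministic components, together with the polynomial tail rate needed for the $L^p$ part. I would take this entirely from Lemma \ref{lemma_b12}, whose proof rests on Ljung-type ergodic results for uniformly stable filters of independent sequences with all moments bounded.
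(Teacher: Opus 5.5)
Your proof is correct. The paper gives no argument of its own for this lemma: it imports the almost-sure bound from \citet{Guo1986} and the $L^p$ rate and asymptotic normality from \citet{Liu2025}.

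Your almost-sure part is essentially the mechanism behind Guo's theorem: the closed form $\tilde{\theta}_k^{\text{RLS}}=P_k(P_0^{-1}\tilde{\theta}_0^{\text{RLS}}+S_k)$ combined with the Lai--Wei self-normalized bound. There you coincide with the cited source.

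For the $L^p$ rate and the CLT you derive the results yourself from Lemmas~\ref{lemma_b22} and~\ref{lemma_b12}. Those lemmas are also imported from \citet{Liu2025}, but they concern only the regressors, so there is no circularity. The tools are Burkholder's inequality with a split on $\mathrm{C}_k$, and a martingale CLT with Slutsky's lemma.

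Your route makes two things transparent. First, the only nontrivial probabilistic input is the arbitrary-polynomial-rate law of large numbers for $P_k^{-1}/k$. Second, no boundedness of the parameter or of the estimates is needed: on the bad event, the trivial bound $P_k\le P_0$ together with $\mathbb{P}(\mathrm{C}_k)=O(k^{-\gamma})$ for arbitrarily large $\gamma$ absorbs everything. This is the same bad-event absorption the paper later uses in the proof of Theorem~\ref{theorem2b}. The citation route is shorter, but it leaves the reader to check that the external hypotheses match Assumptions~\ref{ass_a}--\ref{ass_c}.

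You should make three points explicit.
\begin{itemize}
\item You use that $d_l$ is independent of $\sigma\{e_j,d_j: j\le l-1\}$. Assumptions~\ref{ass_a} and~\ref{ass_c} do not literally state this joint independence. It holds when $d$ is, or is a component of, the innovation $e$, as in Ljung's framework, and the paper relies on it tacitly too.
\item For stochastic inputs, $\delta_d^{-2}\sum_{l\le k}\mathbb{E}[\phi_l\phi_l^T]$ is the Fisher information of the joint data $\{u_{l-1},y_l\}_{l\le k}$ under a $\theta$-free input mechanism. It is not the Fisher information of $y_1,\dots,y_k$ alone.
\item Absorbing the $O(\lambda_{\max}(P_k))$ initial-condition term into the first display requires $\log(1+\sum_{l\le k}\|\phi_l\|^2)$ to be eventually bounded away from zero. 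This holds a.s., since $\sum_{l\le k}\|\phi_l\|^2\to\infty$ under persistent excitation.
\end{itemize}
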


\begin{lemmax}\label{lemma_c}
Under Assumptions \ref{ass_a}-\ref{ass_c}, 
\begin{align}
\label{lemma_c_1} \left\Vert \tilde{\theta}_{k}^{\text{RLS}} \right\Vert = O\left(\sqrt{\frac{\log k}{k}}\right) \quad  \mathrm{a.s.}
\end{align}
\end{lemmax}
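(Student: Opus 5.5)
The bound follows by combining the almost sure estimate of Lemma \ref{lemma_aaa444} with almost sure growth bounds on $\sum_{l\le k}\Vert\phi_l\Vert^2$ and on $\lambda_{\min}(P_k^{-1})$. The first bound gives
\[
\Vert \tilde{\theta}_{k}^{\text{RLS}}\Vert = O\Big(\sqrt{\log(1+\textstyle\sum_{l=1}^{k}\Vert\phi_l\Vert^2)/\lambda_{\min}(P_k^{-1})}\Big) \quad \mathrm{a.s.}
\]
It therefore suffices to prove two almost sure statements:
\begin{align*}
& \textstyle\sum_{l=1}^{k}\Vert\phi_l\Vert^2 = O(k^{c}) \quad \mathrm{a.s.} \text{ for some constant } c, \\
& \liminf_{k\to\infty} \lambda_{\min}(P_k^{-1})/k > 0 \quad \mathrm{a.s.}
\end{align*}
Together these give a numerator of order $\log k$ and a denominator at least of order $k$, which is exactly (\ref{lemma_c_1}).

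For the numerator I would use Lemma \ref{lemma_b21} with $\mu=1$ and $\gamma=2$. This gives $\sum_k \mathbb{P}(\Vert\phi_{k+1}\Vert>k)<\infty$, so by the Borel--Cantelli lemma $\Vert\phi_{k}\Vert\le k$ for all large $k$ almost surely. Hence $\sum_{l\le k}\Vert\phi_l\Vert^2 = O(k^3)$ a.s. and $\log(1+\sum_{l\le k}\Vert\phi_l\Vert^2)=O(\log k)$ a.s. As an alternative, Lemma \ref{lemma_b22} together with the Markov inequality and Borel--Cantelli applied along $k=2^j$ gives the same polynomial bound.

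For the denominator I would use (\ref{lemma_b_1}) of Lemma \ref{lemma_b12} with $\gamma=2$ and $\varepsilon=\lambda_{\min}(\bar{\mathbb{E}}[\{\phi_k\phi_k^T\}_{k=1}^{\infty}])/2$. This choice of $\varepsilon$ is positive by the persistent excitation condition in Assumption \ref{ass_a}. The failure probabilities are then summable, so Borel--Cantelli gives $\Vert P_k^{-1}/k-\bar{\mathbb{E}}[\phi_k\phi_k^T]\Vert\le\varepsilon$ for all large $k$ almost surely. Weyl's inequality then yields $\lambda_{\min}(P_k^{-1})\ge k\varepsilon$ eventually. Substituting both bounds into the estimate of Lemma \ref{lemma_aaa444} gives $\Vert\tilde\theta_k^{\text{RLS}}\Vert=O(\sqrt{\log k/(k\varepsilon)})=O(\sqrt{\log k/k})$ a.s.

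The main point to check is the denominator step: one must make sure the polynomial tail rate in (\ref{lemma_b_1}) is uniform enough to be summable, which holds because $\gamma$ is arbitrary, so taking $\gamma=2$ suffices. The remaining steps are routine Borel--Cantelli arguments.
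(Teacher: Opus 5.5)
Your proof is correct and follows the same route as the paper. You reduce to the almost sure bound of Lemma \ref{lemma_aaa444}, then use (\ref{lemma_b_1}) with Borel--Cantelli to get $\lambda_{\min}(P_k^{-1})$ of order at least $k$ eventually. The only difference is the numerator: the paper reuses Lemma \ref{lemma_b12} via $P_k^{-1}=P_0^{-1}+\sum_{l=1}^{k}\phi_l\phi_l^T$ to show $\frac{1}{k}\sum_{l=1}^{k}\Vert\phi_l\Vert^2$ converges a.s., whereas your cruder $O(k^3)$ bound from Lemma \ref{lemma_b21} works just as well, since only its logarithm enters.
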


\begin{pf}
By Lemma \ref{lemma_b12} in Appendix \ref{aap_a} and Borel-Cantelli lemma \citep{ash2014real}, one can get
\begin{align}\label{Pk13a}
\lim\limits_{k \rightarrow \infty} \left\Vert \bar{\mathbb{E}}\left[ \{\phi_{k}\phi_{k}^{T}\}_{k=1}^{\infty}\right] - \frac{P_{k}^{-1}}{k}\right\Vert
=  0 \quad  \mathrm{a.s.}
\end{align}
which indicates that
\begin{align}\label{Pk13aa1}
\lim\limits_{k \rightarrow \infty} \frac{\lambda_{\min}(P_{k}^{-1})}{k} = \lambda_{\min}\left(\bar{\mathbb{E}}\left[ \{ \phi_{k}\phi_{k}^{T}\}_{k=1}^{\infty}\right]\right) \quad  \mathrm{a.s.}
\end{align}
Besides, by the Woodbury matrix identity \citep{ljung1987theory}, we have $P_{k} = (\sum_{l=1}^{k} \phi_{l}\phi_{l}^{T} + P_{0}^{-1})^{-1}.$
Hence, $\bar{\mathbb{E}}[ \{ \phi_{k}\phi_{k}^{T}\}_{k=1}^{\infty}] - P_{k}^{-1}/k = \bar{\mathbb{E}}[ \{ \phi_{k}\phi_{k}^{T}\}_{k=1}^{\infty}]  - \sum_{l=1}^{k}\phi_{l}\phi_{l}^{T}/k - P_{0}^{-1}/k.$
Then, by (\ref{Pk13a}), it holds that
\[
\lim\limits_{k \rightarrow \infty} \left\Vert   \bar{\mathbb{E}}\left[ \{ \phi_{k}\phi_{k}^{T}\}_{k=1}^{\infty}\right] - \frac{1}{k}\sum_{l=1}^{k}\phi_{l}\phi_{l}^{T}\right\Vert
=  0 \quad  \mathrm{a.s.}
\]
which implies that
\begin{align}\label{a6sa6}
\lim\limits_{k \rightarrow \infty}\frac{1}{k}\sum_{l=1}^{k}\left\Vert\phi_{l}\right\Vert^2 -  \bar{\mathbb{E}}\left[ \{ \phi_{k}^{T}\phi_{k}\}_{k=1}^{\infty}\right] 
=  0 \quad  \mathrm{a.s.}
\end{align}
Thus, by (\ref{Pk13aa1}), (\ref{a6sa6}), and Lemma \ref{lemma_aaa444} in Appendix \ref{aap_b},  one can get (\ref{lemma_c_1}) holds. {\hfill $\qed$}

\end{pf}

\begin{lemmax}\label{lemma_a111}
Under Assumptions \ref{ass_a}-\ref{ass_c}, for any $0 < \gamma < 1$,
\begin{align}\label{ttt222}
\left\Vert \hat{\theta}_{k}^{\text{RLS}} - \hat{\theta}_{k-1}^{\text{RLS}} \right\Vert = O\left(\frac{1}{k^\gamma}\right)  \quad \mathrm{a.s.} 
\end{align}
\end{lemmax}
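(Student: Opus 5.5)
We need to bound $\Vert \hat{\theta}_{k}^{\text{RLS}} - \hat{\theta}_{k-1}^{\text{RLS}}\Vert = \Vert a_{k}P_{k-1}\phi_{k}(\phi_{k}^{T}\tilde{\theta}_{k-1}^{\text{RLS}} + d_{k})\Vert$. We use (\ref{RLS1}) together with $y_{k} = \phi_{k}^{T}\theta + d_{k}$ and $0<a_{k}\le 1$, which give
\[
\left\Vert \hat{\theta}_{k}^{\text{RLS}} - \hat{\theta}_{k-1}^{\text{RLS}} \right\Vert \leq \left\Vert P_{k-1}\right\Vert \left\Vert \phi_{k}\right\Vert \left( \left\Vert \phi_{k}\right\Vert \left\Vert \tilde{\theta}_{k-1}^{\text{RLS}}\right\Vert + \vert d_{k}\vert \right).
\]
The plan is to show that each factor is almost surely controlled, with all sub-polynomial growth absorbed into the gap $1-\gamma$.

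First, $\Vert P_{k-1}\Vert = O(1/k)$ a.s. This follows from (\ref{lemma_b_2}) in Lemma \ref{lemma_b12} with $\gamma = 2$ and the Borel--Cantelli lemma; equivalently, it follows from (\ref{Pk13aa1}) in the proof of Lemma \ref{lemma_c}.

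Second, fix $\mu \triangleq (1-\gamma)/4 > 0$. Lemma \ref{lemma_b21} with exponent $2$ gives $\mathbb{P}(\Vert \phi_{k}\Vert > k^{\mu}) = O(1/k^{2})$, which is summable, so by Borel--Cantelli $\Vert \phi_{k}\Vert \le k^{\mu}$ eventually a.s. For the noise, Assumption \ref{ass_c} and the Markov inequality with a moment of order $r > 1/\mu$ give $\mathbb{P}(\vert d_{k}\vert > k^{\mu}) \le \mathbb{E}[\vert d_{k}\vert^{r}]/k^{r\mu}$, which is summable, hence $\vert d_{k}\vert \le k^{\mu}$ eventually a.s. Third, Lemma \ref{lemma_c} gives $\Vert \tilde{\theta}_{k-1}^{\text{RLS}}\Vert = O(\sqrt{\log k/k}) = O(1)$ a.s.

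Combining these bounds gives, almost surely,
\[
\left\Vert \hat{\theta}_{k}^{\text{RLS}} - \hat{\theta}_{k-1}^{\text{RLS}} \right\Vert = O\left(\frac{1}{k}\left(k^{2\mu} + k^{2\mu}\right)\right) = O\left(\frac{1}{k^{1-2\mu}}\right) = O\left(\frac{1}{k^{(1+\gamma)/2}}\right) = O\left(\frac{1}{k^{\gamma}}\right),
\]
since $(1+\gamma)/2 > \gamma$ for $\gamma < 1$.

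The main obstacle is that the regressor $\phi_{k}$ and the noise $d_{k}$ are not uniformly bounded, so their growth has to be controlled pathwise. This is why $\mu$ must be chosen small relative to $1-\gamma$, and why summable tail bounds (available from Lemma \ref{lemma_b21} and the Gaussian moments of $d_{k}$) are needed to apply Borel--Cantelli. Once the exponents are chosen as above, the remaining steps are routine.
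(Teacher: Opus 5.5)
Your proof is correct and follows essentially the same route as the paper. Both use the same one-step decomposition $a_k P_{k-1}\phi_k(\phi_k^T\tilde{\theta}_{k-1}^{\text{RLS}} + d_k)$, get $\Vert P_{k-1}\Vert = O(1/k)$ a.s. from Lemma \ref{lemma_b12} and Borel--Cantelli, obtain almost-sure polynomial growth bounds on $\Vert\phi_k\Vert$ and $\vert d_k\vert$ from summable Markov-type tails, and use only the boundedness of $\tilde{\theta}_{k-1}^{\text{RLS}}$ from Lemma \ref{lemma_c}. The differences are cosmetic: the paper takes growth exponent $(1-\gamma)/2$, which gives exactly $k^{-\gamma}$, and bounds the regressor via the moment bound of Lemma \ref{lemma_b22}, whereas you take $(1-\gamma)/4$ and the tail bound of Lemma \ref{lemma_b21}, which gives the slightly stronger rate $k^{-(1+\gamma)/2}$.
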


\begin{pf}
By Lemma \ref{lemma_b12} in Appendix \ref{aap_a} and Borel-Cantelli Lemma \citep{ash2014real}, we obtain 
\[
\left\Vert P_{k} \right\Vert = O\left(\frac{1}{k}\right) \quad \mathrm{a.s.}
\]
Besides, since $\sup_{k}\mathbb{E}[\vert d_{k} \vert^{4/(1-\gamma)}]<\infty$, by the Markov Inequality \citep{Loeve1977}, one can get 
%\[
%\mathbb{P}\left(\vert d_{k}\vert > k^{(1-\gamma)/2}\right) \leq \frac{1}{k^2}\mathbb{E}\left[\vert d_{k} \vert^{4/(1-\gamma)}\right] = O\left(\frac{1}{k^2}\right).
%\]
$\mathbb{P}(\vert d_{k}\vert > k^{(1-\gamma)/2}) \leq \mathbb{E}[\vert d_{k} \vert^{4/(1-\gamma)}]/k^{2} = O(1/k^2).$
Then, by the Borel-Cantelli lemma \citep{ash2014real}, we have
\[
\left\vert d_{k} \right\vert =  O\left(k^{(1-\gamma)/2}\right) \quad \mathrm{a.s.}
\]
Similarly, by Lemma \ref{lemma_b22} in Appendix \ref{aap_a}, it holds that
\[
\left\Vert \phi_{k} \right\Vert = O\left(k^{(1-\gamma)/2}\right) \quad \mathrm{a.s.}
\]
Furthermore, one can get 
%\[
%\hat{\theta}_{k}^{\text{RLS}} = \hat{\theta}_{k-1}^{\text{RLS}} + a_{k}P_{k-1}\phi_{k}d_{k} - a_{k}P_{k-1}\phi_{k}\phi_{k}^{T}\tilde{\theta}_{k-1}^{\text{RLS}}
%\]
$\hat{\theta}_{k}^{\text{RLS}} = \hat{\theta}_{k-1}^{\text{RLS}} + a_{k}P_{k-1}\phi_{k}d_{k} - a_{k}P_{k-1}\phi_{k}\phi_{k}^{T}\tilde{\theta}_{k-1}^{\text{RLS}}$.
Hence, by Lemma \ref{lemma_c} in Appendix \ref{aap_b} and 
%\begin{align*}
%\Vert \hat{\theta}_{k}^{\text{RLS}} - \hat{\theta}_{k-1}^{\text{RLS}} \Vert \leq & \vert a_{k} \vert \Vert P_{k-1} \Vert \Vert \phi_{k} \Vert \vert d_{k} \vert \\ & + \vert a_{k} \vert \Vert P_{k-1} \Vert \Vert \phi_{k} \Vert^{2} \Vert \tilde{\theta}_{k-1}^{\text{RLS}}\Vert,
%\end{align*}
$\Vert \hat{\theta}_{k}^{\text{RLS}} - \hat{\theta}_{k-1}^{\text{RLS}} \Vert \leq \vert a_{k} \vert \Vert P_{k-1} \Vert \Vert \phi_{k} \Vert \vert d_{k} \vert + \vert a_{k} \vert \Vert P_{k-1} \Vert \Vert \phi_{k} \Vert^{2} \Vert \tilde{\theta}_{k-1}^{\text{RLS}}\Vert$,
we have (\ref{ttt222}) holds. {\hfill $\qed$}
\end{pf}

\end{document}